\pgfplotsset{compat=1.18}
\newcommand{\rev}[1]{\textcolor{black}{#1}}
\newcommand{\del}[1]{}
\newtheorem{theorem}{Theorem}
\newtheorem{lemma}[theorem]{Lemma}
\newtheorem{proposition}[theorem]{Proposition}
\newtheorem{assumption}{Assumption}
\newtheorem{example}{Example}
\acrodef{OCO}{Online Convex Optimization}
\acrodef{SDP}{Semi-Definite Program}
\acrodef{IQC}{Integral Quadratic Constraint}
\acrodef{ADMM}{Alternating Direction Method of Multipliers}
\title{\LARGE \bf
Online Convex Optimization and Integral Quadratic Constraints: \\ \rev{An automated} approach to regret analysis
}
\author{Fabian Jakob and Andrea Iannelli
\thanks{
Authors are with the University of Stuttgart, Institute for Systems Theory and Automatic Control, 70569 Stuttgart, Germany.
{\tt\small \{fabian.jakob, andrea.iannelli\}@ist.uni-stuttgart.de}.
\rev{Fabian Jakob acknowledges the support of the International Max Planck Research School for Intelligent Systems (IMPRS-IS)}.
} 
}
\begin{document}

\maketitle
\thispagestyle{empty}
\pagestyle{empty}

\begin{abstract}

We propose a novel approach for analyzing dynamic regret of first-order \del{constrained} online convex optimization \rev{(OCO)} algorithms for strongly convex and Lipschitz-smooth objectives. Crucially, we provide a general analysis that is applicable to a wide range of first-order algorithms that can be expressed as an interconnection of a linear dynamical system in feedback with a first-order oracle. By leveraging Integral Quadratic Constraints (IQCs), we derive a semi-definite program which, when feasible, provides a regret guarantee for the online algorithm. For this, the concept of variational IQCs is introduced as the generalization of IQCs to time-varying monotone operators. Our bounds capture the temporal rate of change of the problem in the form of the path length of the time-varying minimizer and the objective function variation. In contrast to standard results in OCO, our results do not require \rev{neither} the assumption of gradient boundedness, nor that of a bounded feasible set. Numerical analyses showcase the ability of the approach to capture the dependence of the regret on the function class condition number. 
\end{abstract}

\section{INTRODUCTION}

\ac{OCO} has emerged as a powerful framework for tackling real-time decision-making problems under uncertainty. Traditionally, the study of OCO has focused on proposing online algorithms whose performance is assessed in terms of their static or dynamic regret \cite{zinkevich,jadbabaie}. 
In recent years, this framework has raised interest in the control community both for the design of OCO-inspired controllers \cite{nonhoff} and for using the concept of regret as a performance metric to evaluate controllers dealing with uncertainty \cite{karapetyan}.

OCO algorithms aim to make a sequence of decisions in real-time minimizing a cumulative loss function that is revealed sequentially. Several algorithms have emerged, among which first-order algorithms represent a fundamental subclass. Each algorithm comes with its individual regret guarantees and proof techniques to verify them \cite{hazan_paper}. For the particular case of strongly convex and smooth objective functions, also accelerated methods \cite{hu_neurips} and multi-step methods with regret guarantees have been proposed \cite{zhao_improved_analysis}. 
However, a general methodology to approach their analysis does not exist. 

In contrast, for static convex optimization, \rev{an automated} approach to analyze first-order algorithms based on systems theory has been thoroughly investigated in the last years, see e.g. \cite{lessard,michalowski,scherer_ebenbauer} and references therein. The idea is to model the first-order algorithm as a Lur'e system, i.e. an interconnection of a linear \rev{dynamical} system in feedback with a \rev{first-order oracle}. Integral Quadratic Constraints (IQCs) \rev{can then be} leveraged to formulate analysis conditions based on Semi-Definite Programs (SDPs). Recently, this framework has also been extended to time-varying optimization \del{, providing an approach to establish tracking guarantees of a time-varying minimizer}\cite{jakob_iannelli}.

Time-varying optimization and OCO are in fact inherently related \cite{dallanese_information_streams}. In this paper, we take up tools from \cite{jakob_iannelli} to propose \rev{an automated} approach to dynamic regret analysis for first-order algorithms in OCO.
By leveraging a system theoretic view on algorithms, we model an OCO algorithm as a dynamical system interconnected with a \emph{time-varying} first-order oracle. We consider strongly-convex and smooth objective functions. To handle the time-varying nature of the oracle, we leverage variational Integral Quadratic Constraints (vIQCs), which in contrast to conventional IQCs \rev{depend on different measures of temporal variation of the problem.} \del{not constrain the quadratic term to be greater than zero, but than a term capturing the temporal variation} In line with \rev{the OCO literature}\del{standard results in OCO \cite{tinbao_yang,zhao_improved_analysis},} we obtain a bound on the dynamic regret that accounts for the path length of the time-varying optimal solution, the objective function variation, \rev{and the gradient variation}. In contrast to the common analysis approaches in OCO, our proofs are \rev{algorithm-agnostic and thus, applicable to a large number of first-order algorithms}. The regret upper bound depends on decision variables of the SDP, leaving the possibility to tune the bound by trading off their magnitude. We show the implicit dependence of the regret on the function class condition \rev{ratio} via a numeric case study.

The main contributions can be summarized as follows. We provide a general modeling framework for OCO algorithms \del{-capable of handling both single-step and multi-step algorithms-}and a new proof technique for bounding dynamic regret. Notably, our approach does neither require the typical bounded gradient assumption, nor does it require the bounded feasible set assumption with the use of vIQCs. A numerical study demonstrates the versatility of the general algorithm formulation and provides a comparative study of commonly used OCO-algorithms. We believe the strengths of this new approach consist of: weaker assumptions required for the analysis; generality; and insights provided by comparing the analysis results of different algorithms.

The paper is structured as follows. We state the preliminaries in section~\ref{sec:problem_statement}, introducing the notion of regret and our algorithm formulation. In section~\ref{sec:IQC_intro}, we introduce the IQC formulation needed to establish our regret bounds and illustrate them with numerical examples. In section~\ref{sec:main}, we derive our regret bounds as main results. 
We conclude the paper in section~\ref{sec:conclusions}.

\textbf{Notation.} Let $\mathrm{vec}(v_1, v_2)$ denote the vertical stack of the vectors $v_1, v_2$. We denote $1_p$ a column vector of ones of length $p$ and $\mathbb{I}_p$ the index set of integers from $1$ to $p$. We indicate by $I_d$ a $d\times d$ identity matrix. For positive definite $P$, we define the norm $\| v \|_P := \sqrt{v^\top P v}$. The Kronecker product between two matrices is written as $A \otimes B$. Let $\mathrm{diag}(v)$ denote the diagonal matrix with the elements of a vector $v$ and $\mathrm{blkdiag}(P_1,P_2)$ a blockdiagonal matrix of $P_1$, $P_2$. We write $f(T) = \mathcal{O}(g(T))$ if $\lim_{T \rightarrow \infty} \frac{f(T)}{g(T)} < \infty$. A linear dynamic mapping $x_{t+1} = A x_t + B u_t,\, y_t = C x_t + D u_t$ is compactly expressed as $y_t = G u_t,$ $G=$ \footnotesize$\left[
    \begin{array}{@{\hskip 0.1em}c@{\hskip 0.1em}|@{\hskip 0.1em}c@{\hskip 0.1em}}
    A & B \\ \hline
    C & D
    \end{array}
\right]$. \normalsize
\section{Problem Setup}
\label{sec:problem_statement}

\subsection{Online Convex Optimization}

In \rev{OCO}, an algorithm sequentially selects an action $x_t \in \mathcal{X}$ from a closed convex decision set $\mathcal{X}$ at each time step $t$, based solely on the information available up to time $t-1$. Upon choosing $x_t$, a convex objective function $f_t$ is revealed, and the algorithm incurs a loss $f_t(x_t)$. Throughout this work we assume the objective functions $f_t$ are $m$-strongly convex and $L$-smooth, and that $\mathcal{X} \subseteq \mathbb{R}^d$. The goal is to minimize the cumulative loss over $T$ rounds, and the performance of an OCO algorithm is typically assessed in terms of regret, which represents the cumulative suboptimality \emph{w.r.t.} the best possible decisions in hindsight \cite{hazan_book}.
In this work, we define the best hindsight decision as the pointwise-in-time minimizer
\begin{equation}
    \label{eq:oco}
    x_t^\star = \arg \min_{x \in \mathcal{X}} f_t(x), \quad t\in \mathbb{N}_+,
\end{equation}
leading to the notion of dynamic regret \cite{jadbabaie}
\begin{equation}
	\label{eq:regret_def}
    \mathcal{R}_T = \sum_{t=1}^{T} f_t(x_t) - \sum_{t=1}^{T} f_t(x_t^\star).
\end{equation}

Classical analyses of OCO primarily focus on deriving upper bounds on the regret. Typically, dynamic regret bounds are characterized in terms of regularity measures, which quantify the temporal change of the problem \cite{tinbao_yang, mokhtari}. Notable regularity measures commonly employed in the literature include the path length and squared path length, respectively defined as
\begin{equation}
    \mathcal{P}_T = \sum_{t=1}^{T} \| x_{t}^\star - x_{t+1}^\star \|, \quad \mathcal{S}_T = \sum_{t=1}^{T} \| x_{t}^\star - x_{t+1}^\star \|^2,
\end{equation}
and the function \rev{and gradient variation}
\begin{align}
    \label{eq:function_variation}
    \mathcal{V}_T &= \sum_{t=2}^{T} \sup_{x \in \mathcal{X}} | f_{t-1}(x) - f_t(x) |, \\
	\label{eq:gradient_variation}
	\rev{
	\mathcal{G}_T} &= \rev{\sum_{t=2}^{T} \sup_{x \in \mathcal{X}} \| \nabla f_{t-1}(x) - \nabla f_t(x) \|^2}.
\end{align}

Table~\ref{tab:literature} provides a comparative overview of existing regret bounds for strongly convex and smooth objectives, alongside the bounds established in this work. 
\begin{table}
	\centering
	\setlength{\tabcolsep}{4pt}
	\caption{Regret bounds for strongly convex and smooth objectives {\rev{\footnotesize(OGD=online gradient descent, MD=mirror descent)}}.}
	\label{tab:literature}
	\begin{tabular}{llll}
		\toprule
		Ref. & Algorithm & Regret bound & Assumptions \\
		\midrule
		\cite{mokhtari} & OGD & $\mathcal{O}(\mathcal{P}_T)$ & $\nabla f_t$ bounded \\
		\rev{\cite{zhao_gradient_variation}} & \rev{Optimistic MD} & \rev{$\mathcal{O}(\log \mathcal{G}_T)$} & \rev{$\mathcal{X}$ bounded} \\ 
		\cite{zhao_improved_analysis} & Multi-step OGD & $\mathcal{O}\left( \min\{ \mathcal{P}_T, \mathcal{S}_T, \mathcal{V}_T \} \right)$ & $\nabla f_t$ bounded \\
		\midrule
		Thm. \ref{thm:static} & (\ref{eq:generalized_algorithm}) & $\mathcal{O}(\mathcal{P}_T)$ & $\mathcal{X}$ bounded \\
		Thm. \ref{thm:dynamic} & (\ref{eq:generalized_algorithm}) & $\rev{\mathcal{O}(\mathcal{S}_T + \mathcal{V}_T + \mathcal{G}_T)}$ & None \\
		\bottomrule
	\end{tabular}
\end{table}

We will consider (\ref{eq:oco}) in its equivalent composite form
\label{eq:composite_optimization}
\begin{equation}
    \min_{x \in \mathbb{R}^d} f_t(x) + \mathcal{I}_\mathcal{X}(x),
\end{equation}
where $\mathcal{I}_\mathcal{X}$ is the indicator function
\begin{equation*}
	\mathcal{I}_\mathcal{X}(x) = \begin{cases}
		0 &, \text{if } x\in \mathcal{X}, \\
		\infty &, \text{if } x\notin \mathcal{X}.
	\end{cases}
\end{equation*}
As it is well known, the subdifferential of $\mathcal{I}_\mathcal{X}$ is the normal cone of the set $\mathcal{X}$, denoted as $\partial I_\mathcal{X}(x) \triangleq \mathcal{N}_\mathcal{X}(x)$ \cite{beck}.

\subsection{General First-Order Algorithms}
\label{sec:generalized_algorithms}

In line with \cite{lessard}--\cite{scherer_ebenbauer}, we consider general first-order algorithms that are expressed as a linear time-invariant system
\begin{subequations}
\label{eq:generalized_algorithm}
\begin{align}
	\label{eq:generalized_algorithm:state_space}
	\begin{aligned}
		\xi_{t+1} &= A \xi_t + B u_t, \\
		y_t &= C \xi_t + D u_t
	\end{aligned}
\end{align}
with state $\xi \in \mathbb{R}^{n_\xi}$ in feedback with a first-order oracle $u_t = \varphi_t(y_t)$.
For some integer $p \geq 1, q\geq 0$, we consider that the algorithm makes use of $p$ gradient evaluations of $f_t$ and $q$ subgradient evaluations of $\mathcal{I}_\mathcal{X}$, and the input and output can be decomposed into
\begin{equation}
	y_t = \begin{bmatrix} s_t \\ z_t \end{bmatrix}, \qquad u_t = \begin{bmatrix}
		\delta_t \\ g_t
	\end{bmatrix},
\end{equation}
where $s_t := \mathrm{vec}(s_t^1, \dots, s_t^{p})$ and $z_t := \mathrm{vec}(z_t^1, \dots, z_t^{q})$, with $s_t^i, z_t^j \in \mathbb{R}^d$ for all $i \in \mathbb{I}_p, j\in \mathbb{I}_q$, and
\begin{equation}
	\label{eq:generalized_algorithm:gradient}
	\delta_t=\left[\begin{smallmatrix} \nabla f_t(s^1_t) \\ \vdots \\ \nabla f_t ( s^{p}_t ) \end{smallmatrix}\right], \quad g_t=\left[\begin{smallmatrix} g^{1}_t \\ \vdots \\ g^{q}_t \end{smallmatrix}\right], \quad g_t^j \in \mathcal{N}_\mathcal{X}(z_t^j).
\end{equation}
\end{subequations}
The case $q=0$ is relevant for unconstrained optimization problems.
The algorithm's iterate coincides with the first output, i.e., $x_t =: s_t^1$. To enforce that $x_t$ only depends on information up to $t-1$, we assume the readout is independent from $u_t$, that is, the first block-row in $D$ is assumed to be zero.
The first block-row of $C$ is denoted as $C_{1}$, i.e., $x_t = C_1 \xi_t$. We furthermore \rev{assume that the pair $(C_1, A)$ is observable, so that if the set $\mathcal{X}$ is bounded, then the set of internal states $\xi_t$ is also bounded.}


\rev{To ensure that this dynamical system is meaningful from an optimization standpoint} we require that the fixed point of (\ref{eq:generalized_algorithm}) satisfies the first-order optimality conditions of (\ref{eq:composite_optimization})
\begin{equation}
	\label{eq:first_order_optimality}
	- \nabla f_t (x_t^\star) \in \mathcal{N}_\mathcal{X}(x_t^\star).
\end{equation}
\rev{We will particularly constrain ourselves to algorithms whose fixed-points are of the form}
\begin{align} 
\begin{aligned}
	\label{eq:fixed_point}
	\xi_t^\star &= A \xi_t^\star, & y_t^\star &= C \xi_t^\star, \\
	y_t^\star &= \begin{bmatrix} 1_p \otimes I_d \\ 1_q \otimes I_d \end{bmatrix} x_t^\star,
	& u_t^\star &= \begin{bmatrix} 1_p \otimes I_d \\ -1_q \otimes I_d \end{bmatrix} \nabla f_t(x_t^\star),
\end{aligned}
\end{align}
that is, $s^{i,\star}_t = z^{j,\star}_t = x_t^\star$, and $\delta^{i,\star}_t = - g_t^{j,\star} = \nabla f_t(x_t^\star)$, for all $i\in\mathbb{I}_p\, j\in\mathbb{I}_q$. \rev{Moreover, we enforce
$B u_t^\star = 0$ and $D u_t^\star = 0$, which will be necessary for subsequent derivations.} We make the following assumptions to ensure (\ref{eq:generalized_algorithm}) exhibits (\ref{eq:fixed_point}) as fixed-point.
\begin{assumption}
\label{assum:fixed_point}
	There exists a matrix $U$ such that
	\begin{equation}
	\label{eq:assum:fixed_point:michalowski}
	\begin{bmatrix}
		I - A \\ C
	\end{bmatrix} U = \begin{bmatrix}
		0 \\ \bigl[ \begin{smallmatrix} 1_p \\ 1_q \end{smallmatrix} \bigr] \otimes I_d
	\end{bmatrix},
	\end{equation}
\end{assumption}
\begin{assumption}
	\label{assum:Bu_Du}
    If $q \geq 1$, then the kernels of the matrices $B$ and $D$ satisfy
	\begin{equation}
		\label{eq:assum:fixed_point:B_D}
		\mathrm{ker}\, B =  \begin{bmatrix} 1_p \otimes I_d \\ -1_q \otimes I_d \end{bmatrix}, 
		\quad
		\mathrm{ker}\, D =  \begin{bmatrix} 1_p \otimes I_d \\ -1_q \otimes I_d \end{bmatrix}.
	\end{equation}
\end{assumption}
Both assumptions essentially allow us, given an optimal point $x_t^\star$, to reconstruct the optimal algorithmic state as $\xi^\star_t = U x_t^\star$, which represents a special case of the conditions worked out in \cite{upadhyaya}.  Assumption~\ref{assum:Bu_Du} specifically ensures $B u_t^\star = 0$, $D u_t^\star=0$ \cite{jakob_iannelli}.
This is slightly restrictive, however, we still cover a large class of classical \ac{OCO} algorithms. 
\rev{We illustrate the algorithm representation with the following example.}
\rev{
\begin{example}
	\label{example:multi_step}
	Consider the example of the following two-step projected gradient descent
	\begin{align}
		\label{eq:example:two_step}
		\begin{aligned}
		\hat{x}_t &= \Pi_\mathcal{X} \left[x_t - \alpha \nabla f_t(x_t) \right]\\
		x_{t+1} &= \Pi_\mathcal{X} \left[ \hat{x}_t - \alpha \nabla f_t(\hat{x}_t) \right],
		\end{aligned}
	\end{align}
	with $\Pi_\mathcal{X}(z) := \arg\min_{x\in\mathcal{X}} \| x - z \|^2$ and $\alpha > 0$.
	To bring (\ref{eq:example:two_step}) into the form (\ref{eq:generalized_algorithm}) we leverage the first-order optimality condition arising from the projections, namely
	\begin{align}
		\label{eq:example:first_order_opt}
		\begin{aligned}
		&x_t - \alpha \nabla f_t(x_t) - \hat{x}_t \in \alpha \, \mathcal{N}_\mathcal{X}(\hat{x}_{t}) \\
		&\hat{x}_t - \alpha \nabla f_t(\hat{x}_t) - x_{t+1} \in \alpha \, \mathcal{N}_\mathcal{X}(x_{t+1}).
		\end{aligned}
	\end{align}
	As a technicality, we scaled the cones by $\alpha$ so that (\ref{eq:assum:fixed_point:B_D}) will be satisfied.
	Now define the outputs $s_t\triangleq\mathrm{vec}(x_t, \hat{x}_t)$, $z_t \triangleq \mathrm{vec}(\hat{x}_t, x_{t+1})$ and the inputs $\delta_t\triangleq\mathrm{vec}\left(\nabla f_t(x_t),\nabla f_t(\hat{x}_t)\right)$, $g^1_t \in \mathcal{N}_{\mathcal{X}}(\hat{x}_t)$, $g^2_t \in \mathcal{N}_{\mathcal{X}}(x_{t+1})$. Then, by letting $\xi_t := x_t$, we can write (\ref{eq:example:two_step}) as (\ref{eq:generalized_algorithm}) with $p=2, q=2$ and 
	\begin{align*}
		A&= 1 \otimes I_d, & B&=\begin{bmatrix} -\alpha & -\alpha & -\alpha & -\alpha \end{bmatrix}\otimes I_d, \\
		C&=\begin{bmatrix} 1 \\ 1 \\ 1 \\ 1 \end{bmatrix} \otimes I_d, & D&= \begin{bmatrix} 0 & 0 & 0 & 0 \\ -\alpha & 0 & -\alpha & 0 \\ -\alpha & 0 & -\alpha & 0 \\
		-\alpha & -\alpha & -\alpha & -\alpha \end{bmatrix} \otimes I_d.
	\end{align*}
\end{example}
}

\rev{In the spirit of Example~\ref{example:multi_step}, many relevant algorithms} can be formulated as (\ref{eq:generalized_algorithm}), such as Online Gradient Descent (O-GD) \cite{zinkevich}, Online Accelerated Gradient Descent (O-AGD) \cite{tinbao_yang}, the Online Nesterov Method (O-NM) \cite{hazan_book} or Online Mirror Descent \cite{hall}.




\section{IQCs for varying operators}
\label{sec:IQC_intro}

Integral Quadratic Constraints provide a framework to characterize unknown or nonlinear input-output mappings in terms of inequalities \cite{megretzki}. Informally, an operator $\varphi$ satisfies an IQC defined by a dynamic filter $\Psi$ and symmetric matrix $M$, if for all square summable sequences $y$ and $u = \varphi(y)$ it holds 
\begin{equation}
    \label{eq:IQC_classic}
    \sum_{t=1}^T \psi_t^\top M \psi_t \geq 0, \quad \psi_t = \Psi \begin{bmatrix} y_t \\ u_t \end{bmatrix}
\end{equation}
for all $T \geq 1$.
Such descriptions have proven particularly useful in the context of first-order algorithms for static optimization \cite{lessard,scherer_ebenbauer}. We will introduce an adapted formulation to characterize the input-output relation of time-varying gradients, which will be leveraged to establish our regret bounds.

\subsection{Pointwise IQCs}
\label{sec:pointwise_IQC}

Many IQCs have been derived for gradients of convex and strongly-convex-smooth functions \cite{lessard}. For time-varying operators, we can recover their pointwise-in-time formulation.

\begin{lemma}
\label{lemma:sector_IQC}
Let $f_t$ be $m$-strongly convex and $L$-smooth. Take $x_t \in \mathcal{X}$, $x_t^\star$ as in (\ref{eq:oco}) and define 
\begin{subequations}
    \label{eq:sector_quasi_IQC}
\begin{equation}
    \psi_t = \begin{bmatrix} L I_d & -I_d \\ -mI_d & I_d \end{bmatrix} \begin{bmatrix} x_t-x_t^\star \\ \nabla f_t(x_t) \end{bmatrix}
\end{equation}
Then for all $t$, it holds
\begin{equation}
    \label{eq:sector_quasi_IQC:ineq}
	\psi_t^\top M_1 \psi_t \geq 0 \quad \text{with} \quad M_1 = \begin{bmatrix}
		0 & 1 \\
		1 & 0 \\
	\end{bmatrix} \otimes I_d.
\end{equation}
\end{subequations}
\end{lemma}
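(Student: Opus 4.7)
The plan is to reduce the claim to the standard co-coercivity inequality for gradients of $m$-strongly convex and $L$-smooth functions, which is the time-invariant analogue of this sector IQC in \cite{lessard}. The proof is essentially a direct expansion of the quadratic form followed by invoking one well-known convex-analysis inequality.

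First I would denote $e_t := x_t - x_t^\star$ and $\nu_t := \nabla f_t(x_t) - \nabla f_t(x_t^\star)$, so that $\psi_t = \mathrm{vec}(Le_t - \nu_t,\; -me_t + \nu_t)$. Since $M_1$ is the Kronecker product of $I_d$ with the off-diagonal swap, the quadratic form collapses to
\begin{equation*}
\psi_t^\top M_1 \psi_t \;=\; 2 (Le_t - \nu_t)^\top(-me_t + \nu_t) \;=\; -2mL\|e_t\|^2 + 2(m+L)\, e_t^\top \nu_t - 2\|\nu_t\|^2.
\end{equation*}

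Next I would invoke the co-coercivity inequality for strongly convex and smooth functions, namely
\begin{equation*}
(\nabla f_t(x) - \nabla f_t(y))^\top (x-y) \;\geq\; \tfrac{mL}{m+L}\|x-y\|^2 + \tfrac{1}{m+L}\|\nabla f_t(x) - \nabla f_t(y)\|^2,
\end{equation*}
valid for all $x,y \in \mathbb{R}^d$. Specialising to $x = x_t, \; y = x_t^\star$ and rescaling by $2(m+L)$ yields $2(m+L)\, e_t^\top \nu_t \geq 2mL\|e_t\|^2 + 2\|\nu_t\|^2$, which, combined with the expansion above, gives $\psi_t^\top M_1 \psi_t \geq 0$.

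The main subtlety, and the only non-routine step I would dwell on, is that (\ref{eq:sector_quasi_IQC}) places $\nabla f_t(x_t)$ rather than $\nabla f_t(x_t) - \nabla f_t(x_t^\star)$ in the second block of the filter input. I would reconcile this by appealing to the fixed-point structure (\ref{eq:fixed_point}) together with Assumption~\ref{assum:Bu_Du}: in the unconstrained setting $\nabla f_t(x_t^\star) = 0$ so $\nu_t = \nabla f_t(x_t)$ outright, while in the constrained setting the residual $\nabla f_t(x_t^\star)$ is precisely the quantity absorbed by the normal-cone term at the fixed point and is accounted for by a companion IQC on the normal-cone operator. Modulo this bookkeeping, the lemma is a purely algebraic consequence of co-coercivity.
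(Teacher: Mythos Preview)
Your core approach---expand the quadratic form and invoke the co-coercivity inequality for $m$-strongly convex, $L$-smooth functions---is exactly what the paper has in mind; its entire proof is the sentence ``simple consequence of \cite[Prop.~5]{lessard}'', and that proposition is precisely the co-coercivity bound you wrote down. So on the level of technique there is nothing to compare.

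The subtlety you flag in the last paragraph is real, and your instinct that something needs saying is correct. You define $\nu_t=\nabla f_t(x_t)-\nabla f_t(x_t^\star)$ and then silently replace $\nabla f_t(x_t)$ by $\nu_t$ in $\psi_t$; that substitution is only valid when $\nabla f_t(x_t^\star)=0$. In the constrained setting the inequality as literally stated can fail: with $d=1$, $f_t(x)=\tfrac12 x^2$ (so $m=L=1$), $\mathcal{X}=[1,2]$, $x_t^\star=1$, $x_t=2$, one gets $\psi_t=(-1,1)^\top$ and $\psi_t^\top M_1\psi_t=-2<0$. Your ``absorbed by the normal-cone IQC'' remark is the right intuition for why the downstream use in Theorem~\ref{thm:static} is unaffected---Assumption~\ref{assum:Bu_Du} gives $Bu_t^\star=Du_t^\star=0$, so one may left/right multiply the LMI by $\mathrm{vec}(\xi_t-\xi_t^\star,\,u_t-u_t^\star)$ instead of $\mathrm{vec}(\xi_t-\xi_t^\star,\,u_t)$ without changing any term, and the shifted $\psi_t^i$ then satisfies co-coercivity verbatim---but that is an argument about Theorem~\ref{thm:static}, not a proof of the pointwise claim in the lemma. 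The clean standalone statement would carry $\nabla f_t(x_t)-\nabla f_t(x_t^\star)$ in the second block (which is what \cite[Prop.~5]{lessard} actually delivers for two arbitrary points); with that amendment your expansion-plus-co-coercivity argument is complete as written.
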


Lemma~\ref{lemma:sector_IQC} is a simple consequence of \cite[Prop. 5]{lessard}.
Notably, (\ref{eq:sector_quasi_IQC}) satisfies the inequality \emph{pointwise}, i.e. the filter $\Psi$ is static and every single summand is nonnegative.

We will also need a similar characterization of $\mathcal{I}_\mathcal{X}$.
\begin{lemma}
    \label{lemma:sector_IQC_h}
    Take $x_t \in \mathcal{X}$ and $x_t^\star$ as in (\ref{eq:oco}). Define $\hat{\psi}_t = \mathrm{vec} ( x_t-x_t^\star, \beta_t)$
    for some $\beta_t \in \mathcal{N}_\mathcal{X}(x_t)$. Then for $M_1$ as in (\ref{eq:sector_quasi_IQC:ineq}) and all $t$, it holds $\hat{\psi}_t^\top M_1 \hat{\psi}_t \geq 0$.
\end{lemma}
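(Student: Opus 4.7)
The plan is to expand the quadratic form and reduce the claim to a standard monotonicity property of the normal cone operator. With $M_1 = \bigl[\begin{smallmatrix} 0 & 1 \\ 1 & 0 \end{smallmatrix}\bigr] \otimes I_d$ and $\hat{\psi}_t = \mathrm{vec}(x_t - x_t^\star, \beta_t)$, the Kronecker structure gives
\[
\hat{\psi}_t^\top M_1 \hat{\psi}_t = 2 (x_t - x_t^\star)^\top \beta_t,
\]
so it suffices to show $(x_t - x_t^\star)^\top \beta_t \geq 0$.

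Next, I would invoke the definition of the normal cone: $\beta_t \in \mathcal{N}_\mathcal{X}(x_t)$ means that $\langle \beta_t, y - x_t \rangle \leq 0$ for every $y \in \mathcal{X}$. Since $x_t^\star$ is defined in (\ref{eq:oco}) as a minimizer over $\mathcal{X}$, we have $x_t^\star \in \mathcal{X}$, and substituting $y = x_t^\star$ yields $\langle \beta_t, x_t^\star - x_t \rangle \leq 0$, equivalently $(x_t - x_t^\star)^\top \beta_t \geq 0$. Combining with the first display produces the claim.

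There is no real obstacle here: the statement is essentially a restatement of the defining inequality of the normal cone, and the only thing to verify is membership of $x_t^\star$ in $\mathcal{X}$, which is immediate from (\ref{eq:oco}). The only mild care needed is to ensure the indicator $\mathcal{I}_\mathcal{X}$ plays the role analogous to a convex function whose subdifferential at $x_t$ is $\mathcal{N}_\mathcal{X}(x_t)$, which was already recorded right after the composite reformulation (\ref{eq:composite_optimization}); this parallels Lemma~\ref{lemma:sector_IQC} but with $m = 0$ and $L = \infty$ degenerating the sector, which explains why only the off-diagonal term of $M_1$ survives.
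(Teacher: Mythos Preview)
Your proof is correct and matches the paper's own justification, which simply remarks that Lemma~\ref{lemma:sector_IQC_h} is the statement that the normal cone of a convex set is a monotone operator. Your expansion of the quadratic form and direct appeal to the defining inequality of $\mathcal{N}_\mathcal{X}(x_t)$ with $y = x_t^\star \in \mathcal{X}$ is exactly the content of that remark, spelled out in full.
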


\rev{Note that} Lemma~\ref{lemma:sector_IQC_h} is simply the statement that the normal cone of a convex set is a monotone operator.

\subsection{Variational IQCs}
\label{sec:vIQC_def}

It is well known that introducing a dynamic mapping from $(x_t-x_t^\star, \nabla f_t(x_t))\mapsto \psi_t$ can lead to a less conservative input-output characterization of the gradient operator \cite{lessard,scherer_ebenbauer}. Unfortunately, most dynamic IQC results are not applicable to time-varying operators. However, recently the notion of variational IQCs has been introduced \cite{jakob_iannelli}.

\begin{proposition}
    \label{prop:vIQC}
    \begin{subequations}
    \label{eq:IQC:delta}
    Let $f_t$ be $m$-strongly convex and $L$-smooth. \rev{Define the variational measures $\Delta x_t^\star = x_t^\star - x_{t+1}^\star$ and $\Delta \delta_t (\cdot) = \nabla f_t(\cdot) - \nabla f_{t+1}(\cdot)$. Consider the mapping
    \begin{align}
        \psi_t = \Psi_f
		\begin{bmatrix}
            x_t - x_t^\star \\ \nabla f_t(x_t) \\ \Delta x_t^\star \\ \Delta \delta_t(x_t)
		\end{bmatrix}
    \end{align}
    with the linear filter
        \begin{equation}
            \label{eq:vIQC_realization}
            \Psi_f =
            \left[ 
            \begin{array}{cccc|cccc} 
                0 & 0 & 0 & 0 & I_d & 0 & I_d & 0 \\ 
                0 & 0 & 0 & 0 & 0 & I_d & 0 & -I_d \\ 
                0 & 0 & 0 & 0 & -mI_d & I_d & 0 & 0 \\ 
                0 & 0 & 0 & 0 & aI_d & 0 & 0 & 0 \\ 
                \hline 
                - LI_d & I_d & 0 & 0 & LI_d & -I_d & 0 & 0\\ 
                0 & 0 & 0 & 0 & -mI_d & I_d & 0 & 0 \\ 
                0 & 0 &  I_d & 0 & 0 & 0 & 0 & 0 \\ 
                a  I_d & 0 & 0 & 0 & 0 & 0 & 0 & 0 \\ 
                0 & 0 & 0 &  I_d & 0 & 0 & 0 & 0 \\ 
                -m I_d &  I_d & 0 & 0 & 0 & 0 & 0 & 0 
            \end{array} 
            \right],
        \end{equation}
    where $a := \sqrt{\frac{m(L-m)}{2}}$.}
    Then the quadratic inequality
    \rev{
    \begin{equation}
        \label{eq:IQC:delta:quadratic_inequality}
        \sum_{t=1}^{T} 
        \psi_t^\top 
        M_2 
        \psi_t 
        \geq 
        - 4(L-m) \, \mathcal{V}_T
    \end{equation}
    }
    holds with $\mathcal{V}_T$ as defined in (\ref{eq:function_variation}) and the blockdiagonal matrix $M_2 = \mathrm{blkdiag}\left(\frac{1}{2}M_1, \bigl[\begin{smallmatrix}I_d &\\&-I_d \end{smallmatrix}\bigr], \frac{1}{2}\bigl[\begin{smallmatrix}I_d &\\&-I_d \end{smallmatrix}\bigr]\right)$.
    \end{subequations}
\end{proposition}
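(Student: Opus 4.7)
The plan is to exploit the delay-line structure of $\Psi_f$ (note $A=0$) to extract two pointwise sector-IQC inequalities of the type used in Lemma~\ref{lemma:sector_IQC} from $\sum_t \psi_t^\top M_2 \psi_t$, after which the residual is a gradient-variation term that I would control by the function variation via the descent lemma.

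First I would unpack the realization. Solving $\xi_{t+1}=Bu_t$ with zero initial condition yields $\xi_t^{(1)}=x_{t-1}-x_t^\star$, $\xi_t^{(2)}=\nabla f_t(x_{t-1})$ (the input block $-\Delta\delta_{t-1}(x_{t-1})$ is precisely what advances $\nabla f_{t-1}$ to $\nabla f_t$ at $x_{t-1}$), $\xi_t^{(3)}=-m(x_{t-1}-x_{t-1}^\star)+\nabla f_{t-1}(x_{t-1})$, and $\xi_t^{(4)}=a(x_{t-1}-x_{t-1}^\star)$. Evaluating the six output rows then gives
\begin{align*}
\psi_t^{(1)} &= L(x_t-x_{t-1}) - (\nabla f_t(x_t)-\nabla f_t(x_{t-1})), \\
\psi_t^{(2)} &= -m(x_t-x_t^\star) + \nabla f_t(x_t), \\
\psi_t^{(3)} &= \psi_{t-1}^{(2)}, \\
\psi_t^{(6)} &= -m(x_{t-1}-x_t^\star) + \nabla f_t(x_{t-1}),
\end{align*}
together with $\psi_t^{(5)}=a(x_{t-1}-x_{t-1}^\star)$ and $\psi_t^{(4)}=\psi_t^{(5)}+a\Delta x_{t-1}^\star$.

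The pivotal identity is $\psi_t^{(2)}-\psi_t^{(6)} = -m(x_t-x_{t-1}) + (\nabla f_t(x_t)-\nabla f_t(x_{t-1}))$, so that $(\psi_t^{(1)})^\top(\psi_t^{(2)}-\psi_t^{(6)})$ is exactly the sector-IQC cross product between the consecutive iterates $x_t, x_{t-1}$ for the function $f_t$ --- nonnegative by the standard sector inequality underlying Lemma~\ref{lemma:sector_IQC}. I would split $(\psi_t^{(1)})^\top\psi_t^{(2)}$ along this decomposition, then use the shift $\psi_t^{(3)}=\psi_{t-1}^{(2)}$ and the expansion of $\psi_t^{(4)}$ to reduce the remainder of $\sum_t \psi_t^\top M_2\psi_t$ to a pointwise quadratic in $\tilde e_{t-1}:=x_{t-1}-x_{t-1}^\star$, $\nabla f_{t-1}(x_{t-1})$, $\nabla f_t(x_{t-1})$, and the path-length increment $\Delta x_{t-1}^\star$. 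The choice $a=\sqrt{m(L-m)/2}$ and the $\tfrac{1}{2}$-factors in $M_2$ are tuned precisely so that completing the square in $\Delta x_{t-1}^\star$ cancels all $\Delta x_{t-1}^\star$-dependent cross terms, leaving (i) another sector-IQC cross product, this time at $(x_{t-1}, x_{t-1}^\star)$ for $f_{t-1}$ and again nonnegative, plus (ii) a residual of the form $-\sum_t\|\nabla f_t(x_{t-1})-\nabla f_{t-1}(x_{t-1})\|^2$ (the unit coefficient being precisely the reason for the specific tuning).

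Finally, this gradient-variation residual is controlled by the function variation. Since $h_t:=f_t-f_{t-1}$ has Hessian in $[-(L-m)I,(L-m)I]$, it is $(L-m)$-smooth, so the descent lemma gives, for any $x$,
\[ \|\nabla h_t(x)\|^2 \leq 2(L-m)\bigl(h_t(x)-h_t(x-(L-m)^{-1}\nabla h_t(x))\bigr) \leq 4(L-m)\sup_y |f_t(y)-f_{t-1}(y)|. \]
Summing over $t\in\{2,\dots,T\}$ then yields the asserted bound $-4(L-m)\mathcal{V}_T$. The main obstacle I anticipate is the algebraic bookkeeping in the middle step: one must verify that the specific $a$ and $M_2$ make the path-length-incremental cross terms vanish identically, so that the only remaining negative contribution is the gradient-variation term with precisely the unit coefficient needed for the descent-lemma bound to yield the constant $4(L-m)$.
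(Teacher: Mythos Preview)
The paper's own proof is a one-line invocation of Proposition~4.2 from \cite{jakob_iannelli} specialized to $\rho=1$, followed by the definition of $\mathcal{V}_T$; no algebra is carried out here. Your proposal therefore differs by necessity: you give a self-contained argument that unpacks the delay-line filter, isolates a pointwise sector inequality at $(x_t,x_{t-1})$ for $f_t$ via the identity $\psi_t^{(2)}-\psi_t^{(6)}=-m(x_t-x_{t-1})+(\nabla f_t(x_t)-\nabla f_t(x_{t-1}))$, telescopes the $\psi^{(3)}/\psi^{(5)}$ blocks, and then controls the residual gradient-variation term by the $(L{-}m)$-smoothness of $f_t-f_{t-1}$ and the descent lemma. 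Your filter-state computation is correct, the first sector inequality is valid for any two points, and the descent-lemma step producing $\|\nabla f_t(x_{t-1})-\nabla f_{t-1}(x_{t-1})\|^2\le 4(L-m)\sup_y|f_t(y)-f_{t-1}(y)|$ is exactly what is needed to generate the constant $4(L-m)$. The paper's route is shorter but opaque; yours exposes how the specific choice of $a$ and of the weights in $M_2$ is tuned so that temporal coupling collapses to sector conditions plus the single gradient-variation penalty.

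One caution on the middle step you already flag as the obstacle: the $\Delta x_{t-1}^\star$-dependent terms do not vanish \emph{identically}. Collecting the $\|\Delta x_{t-1}^\star\|^2$ contributions from $-\|\psi_t^{(4)}\|^2$, $-\tfrac12\|\psi_t^{(6)}\|^2$, and $(\psi_t^{(1)})^\top\psi_t^{(6)}$ gives net coefficient $mL-a^2-\tfrac{m^2}{2}=\tfrac{mL}{2}>0$, and the linear-in-$\Delta x_{t-1}^\star$ cross terms combine with portions of the $(e_{t-1},g_{t-1},\hat g_t)$-quadratic into further nonnegative squares that are dropped, not cancelled. This does not break the argument (you only need a lower bound), but the mechanism is ``complete the square and discard'' rather than exact cancellation, and your write-up of the algebra should say so. A minor second point: the descent-lemma step bounds $h_t(x)-h_t(y)$ by $2\sup_z|h_t(z)|$ with $y=x-(L-m)^{-1}\nabla h_t(x)$; when $\mathcal{V}_T$ is defined as a supremum over a bounded $\mathcal{X}$, you should note that $y$ need not lie in $\mathcal{X}$, so the bound as written requires the supremum in $\mathcal{V}_T$ to be taken over a set large enough to contain such $y$.
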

\begin{proof}
    \rev{The proof is the result of applying \cite[Prop. 4.2]{jakob_iannelli} with $\rho=1$ and leveraging the definition of $\mathcal{V}_T$~(\ref{eq:function_variation})}.
\end{proof}

Proposition \ref{prop:vIQC} resembles the classical notion of IQCs, with the difference that (i), the integral quadratic term is additionally dependent on the time-variations of the minimizer \rev{and gradient}, and (ii), the right hand side of the constraint depends on the function value variation. \rev{Crucially, Proposition \ref{prop:vIQC} captures multiple sources of time-variation that inherently change the input-output behaviour of the operator.}
We denote (\ref{eq:IQC:delta}) as an instance of a \emph{variational IQC} (vIQC).

\section{Main results}
\label{sec:main}

\subsection{Regret with pointwise IQCs}
\label{sec:staticIQC}

It is instructive to start by showing regret bounds using pointwise IQCs. We assume the following assumption holds.
\begin{assumption}
    \label{assum:bounded_set}
    The feasible set $\mathcal{X}$ is bounded.
\end{assumption}

Consider (\ref{eq:generalized_algorithm:gradient}) and define for $i \in \mathbb{I}_p, j\in \mathbb{I}_q$ the filtered vectors
\begin{equation}
    \psi^i_t = \begin{bmatrix} L I_d & -I_d \\ -mI_d & I_d \end{bmatrix} \begin{bmatrix} s^i_t-x_t^* \\ \delta^i_t \end{bmatrix}, \quad \hat{\psi}^j_t = \begin{bmatrix} z^j_t - x_t^* \\ g^j_t \end{bmatrix}.
\end{equation}
Lemma~\ref{lemma:sector_IQC} and \ref{lemma:sector_IQC_h} imply that each $\psi^i_t,\, \hat{\psi}^j_t$ satisfy the quadratic inequalities $(\psi^i_t)^\top M_1 \psi_t^i \geq 0$ and $(\hat{\psi}^j_t)^\top M_1 \hat{\psi}^j_t \geq 0$, respectively. By stacking all $\psi_t^i$ and $\hat{\psi}_t^j$ into a vector $\psi_t$, and defining suitable matrices $D_\Psi^u$ and $D_\Psi^u$, we can write down the more compact relation
\begin{align*}
 \psi_t = 
 \begin{bmatrix}
    D_\Psi^y & D_\Psi^u
 \end{bmatrix}
 \begin{bmatrix}
    y_t - y_t^* \\ u_t
 \end{bmatrix},
\end{align*}
where the block rows of $D_\Psi^u$ and $D_\Psi^u$ select the respective components of $y_t - y_t^*$ and $u_t$ to realize $\psi_t^i$, $\hat{\psi}_t^j$. \rev{Moreover,} recall that $y_t^* = C \xi_t^*$, such that
\begin{align*}
    \psi_t &= \underbrace{\begin{bmatrix}
    D_\Psi^y C & D_\Psi^y D + D_\Psi^u
    \end{bmatrix}}_{=:  \begin{bmatrix} \hat{C} & \hat{D} \end{bmatrix}}
    \begin{bmatrix}
    \xi_t - \xi_t^* \\ u_t
    \end{bmatrix}.
\end{align*}
We state the following theorem.

\begin{theorem}
    \label{thm:static}
    Let Assumptions \ref{assum:fixed_point}, \ref{assum:Bu_Du} and \ref{assum:bounded_set} hold. If there exists a symmetric matrix $P \in \mathbb{S}^{n_\xi}$ and non-negative vectors $\lambda_p \in \mathbb{R}_{\geq 0}^p, \lambda_q \in \mathbb{R}_{\geq 0}^q$, such that $P\succ 0$ and the inequality
    \begin{multline}
        \label{eq:LMI:sector}
        \begin{bmatrix}
            A^\top P A - P & A^\top P B  \\
            B^\top P A & B^\top P B
        \end{bmatrix} 
        + \rev{\frac{1}{2}}
        \begin{bmatrix}
            0 & \bigl[ \begin{smallmatrix} C_1^\top & 0 \end{smallmatrix} \bigr] \\ \bigl[ \begin{smallmatrix} C_1 \\ 0 \end{smallmatrix} \bigr] & 0
        \end{bmatrix}
        \\ 
        +
        \begin{bmatrix}
            \hat{C} & \hat{D}
        \end{bmatrix}^\top
        M_\lambda
        \begin{bmatrix}
            \hat{C} & \hat{D}
        \end{bmatrix}
        \preceq 0
    \end{multline}
    holds with $M_\lambda = M_1 \otimes \mathrm{diag}(\lambda_p, \lambda_q)$, then we have
    \begin{equation}
        \label{eq:regret:sector:big_O}
        \mathcal{R}_T = \mathcal{O} \bigl(\lambda_{\max}(P) \mathcal{P}_T \bigr).
    \end{equation}
\end{theorem}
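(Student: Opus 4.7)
The plan is to treat $V_t := \|\xi_t - \xi_t^\star\|_P^2$ as a Lyapunov-like storage function and to extract a per-round regret inequality by left- and right-multiplying the LMI~(\ref{eq:LMI:sector}) with the stacked vector $\mathrm{vec}(e_t, u_t)$, where $e_t := \xi_t - \xi_t^\star$. The three additive blocks of (\ref{eq:LMI:sector}) should each yield a recognizable term, which together form the desired descent.

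For the Lyapunov block, the quadratic form evaluates to $\|A e_t + B u_t\|_P^2 - \|e_t\|_P^2$; combining the fixed-point identity $A\xi_t^\star = \xi_t^\star$ from Assumption~\ref{assum:fixed_point} with $B u_t^\star = 0$ from Assumption~\ref{assum:Bu_Du} gives $A e_t + B u_t = \xi_{t+1} - \xi_t^\star$, so this block equals $\|\xi_{t+1} - \xi_t^\star\|_P^2 - V_t$. For the cross block, the off-diagonal factor $[C_1^\top, 0]$ selects the first component of $u_t$, namely $\delta_t^1 = \nabla f_t(x_t)$, while $C_1 e_t = x_t - x_t^\star$; the block therefore evaluates to $(x_t - x_t^\star)^\top \nabla f_t(x_t)$, which is at least $f_t(x_t) - f_t(x_t^\star)$ by convexity of $f_t$. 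For the IQC block, the Kronecker structure of $M_\lambda$ lets me decompose $\psi_t^\top M_\lambda \psi_t$ as a non-negative combination of the individual quadratic forms $(\psi_t^i)^\top M_1 \psi_t^i$ and $(\hat{\psi}_t^j)^\top M_1 \hat{\psi}_t^j$, each non-negative by Lemmas~\ref{lemma:sector_IQC} and~\ref{lemma:sector_IQC_h}. Summing the three contributions, the LMI yields the per-round inequality $f_t(x_t) - f_t(x_t^\star) \leq V_t - \|\xi_{t+1} - \xi_t^\star\|_P^2$.

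The main obstacle is telescoping $\sum_t \bigl( V_t - \|\xi_{t+1} - \xi_t^\star\|_P^2 \bigr)$ cleanly, since $\|\xi_{t+1} - \xi_t^\star\|_P^2 \neq V_{t+1}$: the fixed point drifts. I would absorb this drift using Assumption~\ref{assum:fixed_point} to write $\xi_{t+1}^\star - \xi_t^\star = -U \Delta x_t^\star$, so that $\xi_{t+1} - \xi_t^\star = e_{t+1} - U \Delta x_t^\star$, and then apply Cauchy--Schwarz in the $P$-norm to obtain $-\|\xi_{t+1} - \xi_t^\star\|_P^2 \leq -\|e_{t+1}\|_P^2 + 2 \|e_{t+1}\|_P \|U \Delta x_t^\star\|_P$. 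The quadratic pieces now telescope to $V_1 - V_{T+1}$, while the cross-term sum is bounded by $2 \max_t \|e_t\|_P \cdot \|U\|_P^{\mathrm{op}} \cdot \mathcal{P}_T$, where $\mathcal{P}_T$ enters through its definition.

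To recover the claimed $\lambda_{\max}(P)$ scaling, I would invoke Assumption~\ref{assum:bounded_set} together with the observability of $(C_1, A)$ stated after (\ref{eq:generalized_algorithm:gradient}), which provides a uniform Euclidean bound $\|e_t\| \leq D$. This gives $\max_t \|e_t\|_P \leq \sqrt{\lambda_{\max}(P)}\, D$ and analogously $\|U \Delta x_t^\star\|_P \leq \sqrt{\lambda_{\max}(P)}\, \|U\|\, \|\Delta x_t^\star\|$; multiplying the two $\sqrt{\lambda_{\max}(P)}$ factors produces the claimed $\lambda_{\max}(P) \mathcal{P}_T$ coefficient, while the telescoping remainder $V_1 - V_{T+1}$ is itself $\mathcal{O}(\lambda_{\max}(P))$ and hence absorbed in the big-$\mathcal{O}$, establishing~(\ref{eq:regret:sector:big_O}).
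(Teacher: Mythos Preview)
Your proposal is correct and follows essentially the same approach as the paper's proof: both left- and right-multiply the LMI by $\mathrm{vec}(\xi_t-\xi_t^\star, u_t)$, invoke Lemmas~\ref{lemma:sector_IQC} and~\ref{lemma:sector_IQC_h} together with convexity to obtain the per-round inequality $f_t(x_t)-f_t(x_t^\star) \leq \|\xi_t-\xi_t^\star\|_P^2 - \|\xi_{t+1}-\xi_t^\star\|_P^2$, and then absorb the fixed-point drift via Cauchy--Schwarz and the state-space diameter bound (Assumption~\ref{assum:bounded_set} plus observability of $(C_1,A)$) before telescoping. The only cosmetic difference is ordering---the paper first relates $\|\xi_{t+1}-\xi_{t+1}^\star\|_P^2$ to $\|\xi_{t+1}-\xi_t^\star\|_P^2$ and then applies the LMI, whereas you do the reverse---and, as a minor aside, the identity $Ae_t + Bu_t = \xi_{t+1}-\xi_t^\star$ actually only requires $A\xi_t^\star = \xi_t^\star$, not $Bu_t^\star = 0$.
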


\begin{proof}
    We start by bounding the \rev{optimality difference in the $P$-normed state space}, that is
    \begin{multline}
        \label{proof:thm:static:1}
        \| \xi_{t+1} - \xi_{t+1}^* \|_P^2 = \| \xi_{t+1} - \xi_{t}^* + \xi_{t}^* - \xi_{t+1}^* \|_P^2 \\
        \qquad \qquad \,\, = \| \xi_{t+1} - \xi_{t}^* \|_P^2 + \| \xi_{t}^* - \xi_{t+1}^* \|_P^2 \\
        \qquad\qquad\qquad\qquad  + 2(\xi_{t+1} - \xi_{t}^*)^\top P (\xi_{t}^* - \xi_{t+1}^*) \\
        \qquad \leq \| \xi_{t+1} - \xi_{t}^* \|_P^2 + 3 \lambda_\mathrm{max}(P)R \| \xi_{t}^* - \xi_{t+1}^* \|,
    \end{multline}
    \rev{where $R$ is the diameter of the state space, which is finite by Assumption~\ref{assum:bounded_set}} and the observability of $(C_1,A)$.
    Next, we can bound the distance $\| \xi_{t+1} - \xi_{t}^* \|_P^2$ by
    \begin{multline*}
        \label{proof:thm:static:2}
        \begin{aligned}
            \| \xi_{t+1} - \xi_t^* \|_P^2 &= \| A (\xi_t- \xi_t^*) + B u_t  \|^2_P
        \end{aligned}
        \\
        = \begin{bmatrix}
        \xi_t - \xi_t^* \\ u_t 
        \end{bmatrix}^\top
        \begin{bmatrix}
            A^\top P A & A^\top P B  \\
            B^\top P A & B^\top P B
        \end{bmatrix} 
        \begin{bmatrix}
            \xi_t - \xi_t^* \\ u_t 
        \end{bmatrix} \\
        \stackrel{(\ref{eq:LMI:sector})}{\leq} \| \xi_{t} - \xi_t^* \|^2_P - (x_t - x_t^*)^\top \nabla f_t(x_t) \qquad \qquad \\ 
        - \sum_{i=1}^p \lambda_p^i (\psi_t^j)^\top M_1 \psi_t^j - \sum_{j=1}^q \lambda_q^j (\hat{\psi}_t^j)^\top M_1 \hat{\psi}_t^j.
    \end{multline*}
    The inequality follows by left and right multiplying (\ref{eq:LMI:sector}) with $\mathrm{vec}(\xi_t - \xi_t^*, u_t)$.
    We use Lemma \ref{lemma:sector_IQC} and \ref{lemma:sector_IQC_h}, and the fact that by convexity $(x_t - x_t^\star)^\top \nabla f_t(x_t) \geq f(x_t) - f(x_t^\star)$, to get
    \begin{equation}
        \label{proof:thm:static:3}
        \| \xi_{t+1} - \xi_t^* \|_P^2 \leq \| \xi_{t} - \xi_t^* \|^2_P - (f(x_t) - f(x_t^*)).
    \end{equation}
    Combining (\ref{proof:thm:static:1}) and (\ref{proof:thm:static:3}) yields
    \begin{multline}
        f(x_t) - f(x_t^*) \leq  \| \xi_{t} - \xi_t^* \|_P^2 - \| \xi_{t+1} - \xi_{t+1}^* \|_P^2 \\ 
        + 3 \lambda_{\max}(P) R \| \xi_{t}^* - \xi_{t+1}^* \| .
    \end{multline}
    Finally, summing from $t=1$ to $t=T$ we get
    \begin{multline}
        \label{eq:regret:sector:upper_bound}
        \sum_{t=1}^T f(x_t) - f(x_t^*) \leq \| \xi_{1} - \xi_1^* \|_P^2 - \| \xi_{T+1} - \xi_{T+1}^* \|_P^2 \\ 
        + 3 \lambda_{\max}(P) R \sum_{t=1}^T \| \xi_{t}^* - \xi_{t+1}^* \| .
    \end{multline}
    The left-hand side corresponds to $\mathcal{R}_T$ and, since $\xi^*_t = U x_t^*$, the last sum is $\mathcal{O}(\mathcal{P}_T)$. 
\end{proof}

Theorem~\ref{thm:static} shows that, given any algorithm satisfying the structural assumptions, proving a regret upper bound boils down to a feasibility problem in the form of a Linear Matrix Inequality (LMI). Therefore, Theorem~\ref{thm:static} \rev{offers} an \emph{automated} way to establish regret bounds, without the need for ad-hoc individual proofs. 

Qualitatively, (\ref{eq:regret:sector:upper_bound}) shows that the regret grows sublinearly in $T$ if the path length $\mathcal{P}_T$ does. 
\rev{Meanwhile, the sensitivity of the regret bound \emph{w.r.t.} the path length is determined by the maximum eigenvalue of the variable $P$, which can be optimized for by framing (\ref{eq:LMI:sector}) as the feasible set of an SDP whose objective it is to minimize the spectal radius of $P$.}


We \rev{investigate this path length sensitivity for} different algorithms with a numerical study\footnote{The source code for all numerical experiments can be accessed at: \href{https://github.com/col-tasas/2025-oco-with-iqcs}{https://github.com/col-tasas/2025-oco-with-iqcs}.}. In particular, we compare O-GD, Multi-step O-GD, O-NM, and O-AGD \cite{tinbao_yang}. We study the value of $\lambda_{\max}(P)$ for different convexity/Lipschitz moduli $m$ and $L$, and visualize the result over the function class condition ratio $\frac{L}{m}$. The results are shown in Fig.~\ref{fig:results_pointwise}. We observe that most algorithms attain a finite regret bound, with only O-NM \rev{growing unbounded} around $\frac{L}{m} \approx 8.5$. We observe how the factor $\lambda_{\max}(P)$ is proportional to $\frac{L}{m}$, indicating a fundamental dependence of (\ref{eq:regret:sector:big_O}) on the function class condition ratio. \rev{Note that detrimental effects of large $\frac{L}{m}$} are well known in \rev{static optimization}, but has not been particularly emphasized 
in the OCO field. Interestingly, we observe that, in terms of the upper bound (\ref{eq:regret:sector:big_O}), accelerated O-GD performs best among all compared algorithms, and also that 10-Step O-GD performs better than its counterparts with fewer steps.

\begin{figure}
    \centering
%
%
\definecolor{mycolor1}{RGB}{204, 51, 63} 
\definecolor{mycolor2}{RGB}{68, 12, 84}
\definecolor{mycolor3}{RGB}{53, 95, 141}
\definecolor{mycolor4}{RGB}{34, 168, 132}
\definecolor{mycolor5}{RGB}{122, 210, 81}

\begin{tikzpicture}

\begin{axis}[%
width=2.5in,
height=1in,
at={(0.758in,0.481in)},
scale only axis,
xmode=log,
xmin=1,
xmax=100,
xminorticks=true,
xlabel style={font=\color{white!15!black}},
xlabel={$\frac{L}{m}$},
ymin=0,
ymax=500,
ylabel={$\lambda_{\max}(P)$},
ylabel style={font=\color{white!15!black}},
axis background/.style={fill=white},
xmajorgrids,
xminorgrids,
ymajorgrids,
legend style={at={(0.015,0.97)}, nodes={scale=0.65, transform shape}, anchor=north west, legend cell align=left, align=left, draw=white!15!black}
]
\addplot [color=mycolor1, line width=1.5pt, solid]
  table[row sep=crcr]{%
1	2.00000033229368\\
1.09854114198756	2.20193749511947\\
1.20679264063933	2.43496690771006\\
1.32571136559011	2.70446673547321\\
1.45634847750124	3.01682392641439\\
1.59985871960606	3.37963269797932\\
1.75751062485479	3.80193250820842\\
1.93069772888325	4.29449463146938\\
2.12095088792019	4.87016723224842\\
2.32995181051537	5.54428953526404\\
2.55954792269954	6.33519070830222\\
2.81176869797423	7.26479028318547\\
3.08884359647748	8.35932083568708\\
3.39322177189533	9.65019866501293\\
3.72759372031494	11.1750711498916\\
4.09491506238042	12.9790797065158\\
4.49843266896945	15.1163807919553\\
4.94171336132383	17.65197879913\\
5.42867543932386	20.6639335955968\\
5.96362331659464	24.2460245627693\\
6.55128556859551	28.5109563488646\\
7.19685673001152	33.5942300261278\\
7.9060432109077	39.6588023481321\\
8.68511373751353	46.9007073989776\\
9.54095476349994	55.5558613838223\\
10.4811313415469	65.9081880284423\\
11.5139539932645	78.2995192842797\\
12.648552168553	93.141473046014\\
13.8949549437314	110.929841140934\\
15.2641796717523	132.261747692375\\
16.7683293681101	157.85675768348\\
18.4206996932672	188.581777254966\\
20.2358964772516	225.48163708172\\
22.2299648252619	269.815622835709\\
24.4205309454865	323.101661000925\\
26.8269579527972	387.169793155443\\
29.4705170255181	464.22620141671\\
32.3745754281764	556.931139086758\\
35.5648030622313	668.492394468353\\
39.0693993705462	802.778380277979\\
42.9193426012878	964.454322954933\\
47.1486636345739	1159.14690080499\\
51.7947467923121	1393.64263501333\\
56.898660290183	1676.12740734246\\
62.5055192527397	2016.4753034607\\
68.66488450043	2426.59799945596\\
75.4312006335462	2920.86339762289\\
82.8642772854684	3516.60846202581\\
91.0298177991522	4234.74365144312\\
100	5100.49948943705\\
};
\addlegendentry{O-GD}

\addplot [color=mycolor4, line width=1.5pt, dashed]
  table[row sep=crcr]{%
1	2.00000196161398\\
1.09854114198756	2.19709297206221\\
1.20679264063933	2.41377140385525\\
1.32571136559011	2.65244309552431\\
1.45634847750124	2.91617105465786\\
1.59985871960606	3.20881115083164\\
1.75751062485479	3.5351536140186\\
1.93069772888325	3.90107209528099\\
2.12095088792019	4.31368906724514\\
2.32995181051537	4.78156763282783\\
2.55954792269954	5.31494149608198\\
2.81176869797423	5.92599596953456\\
3.08884359647748	6.62921416730556\\
3.39322177189533	7.44179940131758\\
3.72759372031494	8.38419489209547\\
4.09491506238042	9.48071695317412\\
4.49843266896945	10.7603154825388\\
4.94171336132383	12.25750761121\\
5.42867543932386	14.0134844520809\\
5.96362331659464	16.0774744227884\\
6.55128556859551	18.5083509133188\\
7.19685673001152	21.376601263208\\
7.9060432109077	24.7666755896794\\
8.68511373751353	28.7798282628181\\
9.54095476349994	33.5375411044916\\
10.4811313415469	39.185637749044\\
11.5139539932645	45.899246588681\\
12.648552168553	53.8888067201728\\
13.8949549437314	63.4072438641187\\
15.2641796717523	74.758700889988\\
16.7683293681101	88.3089999222611\\
18.4206996932672	104.498299693721\\
20.2358964772516	123.856336033479\\
22.2299648252619	147.020765698097\\
24.4205309454865	174.759429606324\\
26.8269579527972	207.996975450798\\
29.4705170255181	247.847197345011\\
32.3745754281764	295.65189903763\\
35.5648030622313	353.02781089561\\
39.0693993705462	421.9232091911\\
42.9193426012878	504.686243713335\\
47.1486636345739	604.147324247811\\
51.7947467923121	723.718238004086\\
56.898660290183	867.512655311365\\
62.5055192527397	1040.49000174344\\
68.66488450043	1248.6310656823\\
75.4312006335462	1499.14723217587\\
82.8642772854684	1800.73594865258\\
91.0298177991522	2163.88615501179\\
100	2601.24986403606\\
};
\addlegendentry{2-Step O-GD}

\addplot [color=mycolor5, line width=1.5pt, dotted]
  table[row sep=crcr]{%
1	2.00000237466185\\
1.09854114198756	2.19708236106007\\
1.20679264063933	2.41358529512425\\
1.32571136559011	2.651422793219\\
1.45634847750124	2.9126970393271\\
1.59985871960606	3.19971751813473\\
1.75751062485479	3.51502126741014\\
1.93069772888325	3.86139548255549\\
2.12095088792019	4.24190181038972\\
2.32995181051537	4.65990369451666\\
2.55954792269954	5.11909622174353\\
2.81176869797423	5.623539344076\\
3.08884359647748	6.17769624744866\\
3.39322177189533	6.78647948806193\\
3.72759372031494	7.45531195152819\\
4.09491506238042	8.1902134008799\\
4.49843266896945	8.99792897666669\\
4.94171336132383	9.88612111682863\\
5.42867543932386	10.8636460768722\\
5.96362331659464	11.9409333069822\\
6.55128556859551	13.1304791881302\\
7.19685673001152	14.4474499673626\\
7.9060432109077	15.9103899014152\\
8.68511373751353	17.5420141390082\\
9.54095476349994	19.3700823379125\\
10.4811313415469	21.4283755652703\\
11.5139539932645	23.7577599714954\\
12.648552168553	26.4073798617341\\
13.8949549437314	29.4361481780014\\
15.2641796717523	32.9143959852115\\
16.7683293681101	36.9259840621355\\
18.4206996932672	41.5707401540955\\
20.2358964772516	46.9676078912858\\
22.2299648252619	53.2582827122111\\
24.4205309454865	60.6116434464993\\
26.8269579527972	69.2293463866985\\
29.4705170255181	79.3522770000381\\
32.3745754281764	91.2685078650741\\
35.5648030622313	105.322723441805\\
39.0693993705462	121.927969272188\\
42.9193426012878	141.579356545965\\
47.1486636345739	164.870745647475\\
51.7947467923121	192.514959432475\\
56.898660290183	225.367809776881\\
62.5055192527397	264.457861723233\\
68.66488450043	311.021152159688\\
75.4312006335462	366.543752376431\\
82.8642772854684	432.813187222365\\
91.0298177991522	511.979733910644\\
100	606.632358254733\\
};
\addlegendentry{10-Step O-GD}

\addplot [color=mycolor3, line width=1.5pt, dashdotted]
  table[row sep=crcr]{%
1	2.0000018234507\\
1.09854114198756	2.30397485840304\\
1.20679264063933	2.65649650310947\\
1.32571136559011	3.0650577102961\\
1.45634847750124	3.53827806941351\\
1.59985871960606	4.08609299854901\\
1.75751062485479	4.71998938135094\\
1.93069772888325	5.45330904100571\\
2.12095088792019	6.30165760809369\\
2.32995181051537	7.28349121910101\\
2.55954792269954	8.42103049793493\\
2.81176869797423	9.74185989072046\\
3.08884359647748	11.2821635998749\\
3.39322177189533	13.0945908101095\\
3.72759372031494	15.2705920980032\\
4.09491506238042	17.9954828050103\\
4.49843266896945	21.6116402600581\\
4.94171336132383	26.6897213855695\\
5.42867543932386	34.2998270105008\\
5.96362331659464	46.8424212222535\\
6.55128556859551	71.1540139653898\\
7.19685673001152	137.673770729665\\
7.9060432109077	1047.15584390362\\
};
\addlegendentry{O-NM}

\addplot [color=mycolor2, line width=1.5pt, densely dotted]
  table[row sep=crcr]{%
  1	2.00000205654993\\
  1.09854114198756	1.67825425213788\\
  1.20679264063933	1.45643594548896\\
  1.32571136559011	1.36725884331788\\
  1.45634847750124	1.46369278869633\\
  1.59985871960606	1.70506449150517\\
  1.75751062485479	2.02963880970807\\
  1.93069772888325	2.41247318630977\\
  2.12095088792019	2.84553145341732\\
  2.32995181051537	3.32653527608361\\
  2.55954792269954	3.85525073111267\\
  2.81176869797423	4.43283618917956\\
  3.08884359647748	5.06185638566962\\
  3.39322177189533	5.74555061688335\\
  3.72759372031494	6.48797398376796\\
  4.09491506238042	7.29404824153815\\
  4.49843266896945	8.16955888618118\\
  4.94171336132383	9.12116631410462\\
  5.42867543932386	10.1564205246961\\
  5.96362331659464	11.2837897617109\\
  6.55128556859551	12.512704687927\\
  7.19685673001152	13.8536097434356\\
  7.9060432109077	15.3180328006456\\
  8.68511373751353	16.9186685322\\
  9.54095476349994	18.6694723002518\\
  10.4811313415469	20.5857726031559\\
  11.5139539932645	22.6843911263505\\
  12.648552168553	24.9838020345676\\
  13.8949549437314	27.5042560222693\\
  15.2641796717523	30.2679836080495\\
  16.7683293681101	33.2993792098095\\
  18.4206996932672	36.6252040669656\\
  20.2358964772516	40.2748414999661\\
  22.2299648252619	44.2805322701254\\
  24.4205309454865	48.6776706083159\\
  26.8269579527972	53.505121101727\\
  29.4705170255181	58.8055342796291\\
  32.3745754281764	64.6257724190735\\
  35.5648030622313	71.0172545836642\\
  39.0693993705462	78.036497149108\\
  42.9193426012878	85.7455517151924\\
  47.1486636345739	94.2125327863671\\
  51.7947467923121	103.512307122727\\
  56.898660290183	113.727021831407\\
  62.5055192527397	124.94706729218\\
  68.66488450043	137.271528204716\\
  75.4312006335462	150.809373238339\\
  82.8642772854684	165.680271428442\\
  91.0298177991522	182.015693773972\\
  100	199.959999552068\\
};
\addlegendentry{O-AGD}

\end{axis}
\end{tikzpicture}%
    \caption{\rev{Regret sensitivity to $\mathcal{P}_T$ over function class condition ratio, according to Theorem \ref{thm:static}.}}
    \label{fig:results_pointwise}
\end{figure}

\subsection{Regret with variational IQCs}
\label{sec:vIQC}

\rev{The use of pointwise IQC for the characterization of slope-restricted nonlinearities is a known source of conservatism \cite{lessard}. We therefore now develop a regret bound that leverages the variational IQCs.}

We introduce the fixed-point variation $\Delta \xi_t^\star := \xi_{t}^\star - \xi_{t+1}^\star$. Using this, we can write the evolution of (\ref{eq:generalized_algorithm}) in error coordinates $\tilde{\xi}_t := \xi_t - \xi_t^\star$ as
\begin{align}
    \begin{aligned}
        \label{eq:generalized_algorithm_delta}
    \tilde{\xi}_{t+1} &= A \tilde{\xi}_t + B u_t + \Delta \xi^\star_t \\
    \tilde{y}_t &= C \tilde{\xi}_t + D u_t.
    \end{aligned}
\end{align}



\rev{
We now apply Proposition~\ref{prop:vIQC} to each subcomponent of $(s_t, \delta_t)$. We moreover apply Proposition~\ref{prop:vIQC} to $(z_t, g_t)$ by letting $f_t \leftarrow \mathcal{I}_{\mathcal{X}}$, $\nabla f_t \leftarrow \mathcal{N}_\mathcal{X}$ and $m \rightarrow 0$ and $L \rightarrow \infty$. Since $\mathcal{I}_\mathcal{X}$ is not time-varying, the gradient and function variation is not needed, and the vIQC simplifies drastically. 
That is, we can define
\begin{align}
    \psi_t^i = \Psi_f \begin{bmatrix} s^i_t - x_t^\star \\ \delta_t^i \\ \Delta x_t^\star \\ \Delta \delta_t(s^i_t) \end{bmatrix}, \quad \hat{\psi}_t^j = \Psi_{\mathcal{I}} \begin{bmatrix} z_t^j - x_t^\star \\ g_t^j \\ \Delta x_t^\star \end{bmatrix} 
\end{align}
with $\Psi_{f}$ and $\Delta \delta_t$ as defined in Proposition~\ref{prop:vIQC}} and $\Psi_{\mathcal{I}}$ adapted accordingly, and it holds $\sum_{t=1}^{T} (\psi^i_t)^\top M_2 \psi_t^i \geq -4(L-m)\mathcal{V}_T$ and $\sum_{t=1}^{T} (\hat{\psi}^j_t)^\top M_1 \hat{\psi}_t^j \geq 0$ for all $i \in \mathbb{I}_p$, $j \in \mathbb{I}_q$. 
\ifthenelse{\boolean{arxiv}}{
    We refer to Appendix \ref{appendix:A} for the discussion on $\Psi_{\mathcal{I}}$.
}{
    We refer to the extended version of this work \cite[Appendix A]{jakob_iannelli_b} for the discussion on $\Psi_{\mathcal{I}}$.
}%
 

Analogously to the last section, we stack all $\psi_t^i$ and $\hat{\psi}_t^j$ into a vector $\psi_t$, to get the a compact vIQC
\rev{
\begin{align*}
     \psi_t = 
     \left[ \begin{array}{c|cccc}
        A_\Psi & B_\psi^y & B_\Psi^u & B_\Psi^{\Delta \xi} & B_\Psi^{\Delta \delta} \\ \hline
        C_\Psi & D_\psi^y & D_\Psi^u & D_\Psi^{\Delta \xi} & D_\Psi^{\Delta \delta}
        \end{array} \right]
        \begin{bmatrix}
        \tilde{y}_t \\ u_t \\ \Delta \xi_t^\star \\ \bm{\Delta \delta}_t
        \end{bmatrix}
\end{align*}
where we defined $\bm{\Delta \delta}_t := \mathrm{vec}(\Delta \delta_t(s^1_t),\dots, \Delta \delta_t(s^p_t))$. The respective matrices result from a straightforward rearrangement and stacking of the filter matrices. In particular, we leveraged that $(1_{p+q} \otimes I_d) \Delta x_t^\star = C \Delta \xi_t^\star$.}
\rev{Together with (\ref{eq:generalized_algorithm_delta}), we can build the augmented plant
{\setlength{\arraycolsep}{3pt}
\begin{align}\label{eq:augmented_plant}
    \left[ \begin{array}{c|c c c} \hat{A} & \hat{B}_u & \hat{B}_{\Delta \xi} & \hat{B}_{\Delta \delta} \\ \hline \hat{C} & \hat{D}_u & \hat{D}_{\Delta \xi} & \hat{D}_{\Delta \delta} \end{array} \right]
    \triangleq
    \left[ \begin{array}{cc|ccc} A & 0 & B & I & 0 \\ B_\Psi^y C & A_\Psi & B_\Psi^u & B_\Psi^{\Delta \xi} & B_\Psi^{\Delta \delta} \\ \hline D_\Psi^y C & C_\Psi & D_\Psi^u & D_\Psi^{\Delta \xi} & D_\Psi^{\Delta \delta} \end{array} \right],
\end{align}
}
defining the mapping $(u_t, \Delta \xi_t^\star, \bm{\Delta \delta}_t) \mapsto \psi_t$.
}

\begin{theorem}
    \label{thm:dynamic}
    Let Assumptions \ref{assum:fixed_point} and \ref{assum:Bu_Du} hold. Consider the augmented plant (\ref{eq:augmented_plant}).
    If there exists a symmetric matrix $P \in \mathbb{S}^{n_\xi+n_\zeta}$, non-negative vectors $\lambda_p \in \mathbb{R}_{\geq 0}^p, \lambda_q \in \mathbb{R}_{\geq 0}^q$ and scalars \rev{$\gamma_{\Delta \xi}, \gamma_{\Delta \delta} > 0$}, such that $P\succ 0$ and the LMI
    {\setlength{\arraycolsep}{3pt}
    \rev{
    \begin{multline}
        \label{eq:LMI:variational}
        \hspace{-10pt}
        \begin{bmatrix}
            \star
        \end{bmatrix}^\top
        \hspace{-3pt}
        \begin{bmatrix}
            -P &  &  & & \\
            & P & & & \\
            & & M_\lambda & & \\
            & & & -\gamma_{\Delta \xi} I & \\
            & & & &  -\gamma_{\Delta \delta} I
        \end{bmatrix}
        \hspace{-3pt}
        \begin{bmatrix}
        I & 0 & 0 & 0\\
        \hat{A} & \hat{B}_u & \hat{B}_{\Delta \xi} & \hat{B}_{\Delta \delta} \\ \hat{C} & \hat{D}_u & \hat{D}_{\Delta \xi} & \hat{D}_{\Delta \delta} \\
        0 & 0 & I & 0 \\ 0 & 0 & 0 & I
        \end{bmatrix}
        \\ + \frac{1}{2} 
        \begin{bmatrix}
            0 & \bigl[ \begin{smallmatrix} C_1^\top & 0 \\ 0 & 0 \end{smallmatrix} \bigr] \\ \bigl[ \begin{smallmatrix} C_1 & 0 \\ 0 & 0 \end{smallmatrix} \bigr] & 0
        \end{bmatrix}
        \preceq 0 
    \end{multline}
    }
    }
    holds with $M_\lambda = \mathrm{blkdiag}(M_2 \otimes \mathrm{diag}(\lambda_p), M_1 \otimes \mathrm{diag}(\lambda_q))$, then we have
    \rev{
    \begin{equation}
        \label{eq:regret:dynamic:bigO}
        \mathcal{R}_T = \mathcal{O}\bigl(\gamma_1 \mathcal{S}_T + \gamma_2 \mathcal{G}_T + \gamma_3 \mathcal{V}_T \bigr),
    \end{equation}
    with $\gamma_1 = \gamma_{\Delta \xi}$, $\,\gamma_2 = p \gamma_{\Delta \delta}$, $\,\gamma_3 = (L-m)\sum_{i=1}^p \lambda_p^i$.
    }
\end{theorem}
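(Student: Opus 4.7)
The plan is to extend the dissipation argument of Theorem~\ref{thm:static} to the augmented Lur'e system formed by cascading the algorithm dynamics (\ref{eq:generalized_algorithm_delta}) with the vIQC filters. Let $\zeta_t$ denote the concatenated filter states of all $\psi_t^i$ and $\hat{\psi}_t^j$, and define the augmented error state $\chi_t \triangleq \mathrm{vec}(\tilde{\xi}_t, \zeta_t)$, so that (\ref{eq:augmented_plant}) describes the mapping $(u_t, \Delta \xi_t^\star, \bm{\Delta \delta}_t) \mapsto \psi_t$ with state $\chi_t$.

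First, I would left- and right-multiply the LMI (\ref{eq:LMI:variational}) by the extended vector $\mathrm{vec}(\chi_t, u_t, \Delta \xi_t^\star, \bm{\Delta \delta}_t)$. The block $-P$ produces $-\|\chi_t\|_P^2$, the block $P$ produces $\|\chi_{t+1}\|_P^2$ after substituting the state update, the IQC block yields $\psi_t^\top M_\lambda \psi_t$, and the $C_1$ cross term produces $(x_t - x_t^\star)^\top \delta_t^1 = (x_t - x_t^\star)^\top \nabla f_t(x_t)$, which by convexity lower-bounds $f_t(x_t) - f_t(x_t^\star)$. Rearranging gives the pointwise dissipation inequality
\begin{equation*}
    \|\chi_{t+1}\|_P^2 - \|\chi_t\|_P^2 + f_t(x_t) - f_t(x_t^\star) + \psi_t^\top M_\lambda \psi_t \leq \gamma_{\Delta \xi} \|\Delta \xi_t^\star\|^2 + \gamma_{\Delta \delta} \|\bm{\Delta \delta}_t\|^2.
\end{equation*}

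Next I would sum from $t=1$ to $T$. The $P$-norm terms telescope, and dropping $-\|\chi_{T+1}\|_P^2 \leq 0$ yields a boundary constant. The key step is to invoke Proposition~\ref{prop:vIQC} componentwise: $\sum_t (\psi_t^i)^\top M_2 \psi_t^i \geq -4(L-m)\mathcal{V}_T$ for each $i \in \mathbb{I}_p$ and $\sum_t (\hat{\psi}_t^j)^\top M_1 \hat{\psi}_t^j \geq 0$ for each $j \in \mathbb{I}_q$ (since $\mathcal{I}_\mathcal{X}$ is time-invariant). Weighting by $\lambda_p^i$ and $\lambda_q^j$ and moving to the right-hand side gives
\begin{equation*}
    \mathcal{R}_T \leq \|\chi_1\|_P^2 + \gamma_{\Delta \xi} \sum_{t=1}^{T} \|\Delta \xi_t^\star\|^2 + \gamma_{\Delta \delta} \sum_{t=1}^{T} \|\bm{\Delta \delta}_t\|^2 + 4(L-m) \Bigl(\sum_{i=1}^p \lambda_p^i \Bigr) \mathcal{V}_T.
\end{equation*}

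Finally I would translate the remaining sums into the advertised regularity measures. Because Assumption~\ref{assum:fixed_point} gives $\xi_t^\star = U x_t^\star$, we have $\|\Delta \xi_t^\star\|^2 \leq \|U\|^2 \|\Delta x_t^\star\|^2$, so $\sum_t \|\Delta \xi_t^\star\|^2 = \mathcal{O}(\mathcal{S}_T)$. Similarly $\|\bm{\Delta \delta}_t\|^2 = \sum_{i=1}^p \|\nabla f_t(s_t^i) - \nabla f_{t+1}(s_t^i)\|^2 \leq p \sup_{x \in \mathcal{X}} \|\nabla f_t(x) - \nabla f_{t+1}(x)\|^2$, giving $\sum_t \|\bm{\Delta \delta}_t\|^2 = \mathcal{O}(p \,\mathcal{G}_T)$. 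Collecting the coefficients yields $\gamma_1 = \gamma_{\Delta \xi}$, $\gamma_2 = p\,\gamma_{\Delta \delta}$, $\gamma_3 = (L-m)\sum_i \lambda_p^i$ (absorbing the constant $4$ and the $\|U\|^2$ factor into the $\mathcal{O}$-notation), which is exactly (\ref{eq:regret:dynamic:bigO}). The main obstacle I anticipate is bookkeeping the filter's internal state $\zeta_t$ across the augmented dynamics and ensuring the $\bm{\Delta \delta}_t$ enters the LMI precisely in the block structured as in (\ref{eq:augmented_plant}); once the augmentation is correct, the dissipation/summation argument proceeds mechanically as in Theorem~\ref{thm:static}.
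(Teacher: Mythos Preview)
Your proposal is correct and follows essentially the same route as the paper: define the augmented state (the paper calls it $\eta_t$), multiply the LMI~(\ref{eq:LMI:variational}) on both sides by $\mathrm{vec}(\eta_t,u_t,\Delta\xi_t^\star,\bm{\Delta\delta}_t)$, invoke convexity for the $C_1$ cross term and Proposition~\ref{prop:vIQC} for the IQC blocks, telescope, and then convert $\sum_t\|\Delta\xi_t^\star\|^2$ and $\sum_t\|\bm{\Delta\delta}_t\|^2$ into $\mathcal{S}_T$ and $p\,\mathcal{G}_T$ via $\xi_t^\star=Ux_t^\star$ and the sup in~(\ref{eq:gradient_variation}). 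The only cosmetic difference is that the paper absorbs the factor $4$ directly into $\gamma_3$ without comment, whereas you make this explicit.
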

\begin{proof}
    Define $\eta_t$ as the state of (\ref{eq:augmented_plant}). Left and right multiply (\ref{eq:LMI:variational}) by $\mathrm{vec}(\eta_t, u_t, \Delta \xi^\star_t, \rev{\bm{\Delta \delta}_t})$, to obtain the inequality
    \begin{multline*}
        -\| \eta_t \|^2_P + \| \eta_{t+1} \|^2_P - \gamma_{\Delta \xi} \| \Delta \xi_t^\star \|^2 \rev{- \gamma_{\Delta \delta} \sum_{i=1}^p \| \Delta \delta_t(s_t^i) \|^2} \\
        + \sum_{i=1}^p \lambda_p^i (\psi_t^j)^\top M_2 \psi_t^j + \sum_{j=1}^q \lambda_q^j (\hat{\psi}_t^j)^\top M_1 \hat{\psi}_t^j \\ + (x_t - x_t^\star)^\top \nabla f_t(x_t) \leq 0
    \end{multline*}
    \rev{Note that $\sum_{t=1}^T \sum_{i=1}^{p} \| \Delta \delta_t(s_t^i) \|^2 \leq p \mathcal{G}_T$.}
    Leveraging again convexity and Proposition \ref{prop:vIQC}, and summing from $t=1$ to $T$ gives, after telescoping and rearranging terms, that
    \rev{
    \begin{equation}
        \label{eq:regret:dynamic:upper_bound}
        \mathcal{R}_T \leq \| \eta_{1} \|^2_P - \|\eta_{T+1} \|^2_P + \gamma_1 \sum_{t=1}^{T} \| \Delta \xi_t^\star \|^2 +  \gamma_2 \mathcal{G}_T + \gamma_3 \mathcal{V}_T.
    \end{equation}
    }
    By $\xi_t^\star = U x_t^\star$, the sum is $\mathcal{O}(\mathcal{S}_T)$, which gives the result.
\end{proof}

In contrast to (\ref{eq:regret:sector:big_O}), the bound in (\ref{eq:regret:dynamic:bigO}) depends on the squared path length, \rev{gradient variation}, and function variation, with sensitivities $\gamma_1$, $\gamma_2$, and $\gamma_3$ that \rev{are decision variables in the LMI (\ref{eq:LMI:variational}). These degrees of freedom can be optimized to tighten the upper bound (\ref{eq:regret:dynamic:upper_bound}), for example by minimizing a weighted sum $k_1 \gamma_1 + k_2 \gamma_2 + k_3 \gamma_3$ for some $k_1, k_2, k_3 > 0$.} However, the trade-off between those values depends on problem-specific considerations, i.e. on the foresight on $\mathcal{S}_T$, \rev{$\mathcal{G}_T$}, and $\mathcal{V}_T$. I.e., setting \rev{$k_1 = \mathcal{S}_T$, $k_2 = \mathcal{G}_T$, $k_3 = \mathcal{V}_T$} may be beneficial. 


Fig.~\ref{fig:results_variational} \rev{presents a numerical study investigating the algorithm and function class dependence of these sensitivites, using the same list of algorithms as in Fig.~\ref{fig:results_pointwise}}. We observe analogously to Fig.~\ref{fig:results_pointwise} a deteriorating effect of large condition ratios $\frac{L}{m}$. \rev{Note especially that the relative performance of algorithms differs \emph{w.r.t.} which variation metric is considered. As an example, 2-step O-GD tends to be favourable when $\mathcal{G}_T$ is expected to dominate (because it achieves consistently the lowest value of $\gamma_2$), but may perform worse when $\mathcal{S}_T$ is the primary source of variation. Thus, this worst-case bound may provide guidance on the algorithm choice when prior information is available.}
Moreover, Theorem~\ref{thm:dynamic} extends the regret bound for O-NM to higher condition ratios, \rev{indicating a finite regret bound over the whole function class}. We observe that the bounds from both theorems offer distinct information, highlighting the complementary nature of both.

\begin{figure}
    \begin{subfigure}{0.9\columnwidth}
%
%
\definecolor{mycolor1}{RGB}{204, 51, 63} 
\definecolor{mycolor2}{RGB}{68, 12, 84}
\definecolor{mycolor3}{RGB}{53, 95, 141}
\definecolor{mycolor4}{RGB}{34, 168, 132}
\definecolor{mycolor5}{RGB}{122, 210, 81}

\begin{tikzpicture}

\begin{axis}[%
width=2.5in,
height=1in,
at={(0.758in,0.481in)},
scale only axis,
xmode=log,
xmin=1,
xmax=100,
xminorticks=true,
xlabel style={font=\color{white!15!black}},
ymin=0,
ymax=3000,
ylabel={$\gamma_1$},
ylabel style={font=\color{white!15!black}, yshift=-0.00in},
axis background/.style={fill=white},
xmajorgrids,
xminorgrids,
ymajorgrids,
legend style={at={(0.015,0.97)}, nodes={scale=0.7, transform shape}, anchor=north west, legend cell align=left, align=left, draw=white!15!black}
]
\addplot [color=mycolor1, line width=1.5pt, solid]
  table[row sep=crcr]{%
1.0    1.99947665\\
1.1450475699382818    2.65314586\\
1.3111339374215643    3.5448592\\
1.5013107289081733    4.77114748\\
1.7190722018585745    6.4694991\\
1.9684194472866121    8.83769643\\
2.2539339047347906    12.16147722\\
2.580861540418075    14.20322893\\
2.955209235202887    13.82941323\\
3.383855153428234    12.38567485\\
3.874675120456131    12.1301645\\
4.436687330978613    13.26252497\\
5.0802180469130205    15.56638708\\
5.817091329374358    19.59062738\\
6.6608462908091575    25.07595098\\
7.626985859023444    32.66229512\\
8.733261623828433    43.10261493\\
10.0    57.47274445\\
11.450475699382817    77.2767011\\
13.111339374215643    104.62557301\\
15.013107289081734    142.46516468\\
17.190722018585745    194.9130021\\
19.68419447286612    267.73089092\\
22.539339047347912    368.96122216\\
25.808615404180742    509.9080077\\
29.552092352028865    706.46103355\\
33.83855153428233    980.97568335\\
38.7467512045613    1365.11194711\\
44.366873309786115    1903.97311232\\
50.8021804691302    2662.19927419\\
58.170913293743574    3732.53319704\\
66.60846290809158    5248.34050686\\
76.26985859023443    7399.42411529\\
87.33261623828433    10461.49327992\\
100.0    14813.99594152\\
};
\addlegendentry{O-GD}

\addplot [color=mycolor4, line width=1.5pt, dashed]
table[row sep=crcr]{%
1.0    1.99939999\\
1.1450475699382818    2.31245449\\
1.3111339374215643    2.72573494\\
1.5013107289081733    3.27383842\\
1.7190722018585745    4.00389839\\
1.9684194472866121    4.98167168\\
2.2539339047347906    6.29951388\\
2.580861540418075    8.08793102\\
2.955209235202887    10.53241383\\
3.383855153428234    13.89267623\\
3.874675120456131    16.61027247\\
4.436687330978613    22.07996592\\
5.0802180469130205    30.00829891\\
5.817091329374358    41.44510853\\
6.6608462908091575    58.05773318\\
7.626985859023444    82.31078222\\
8.733261623828433    117.84395824\\
10.0    170.00375718\\
11.450475699382817    246.66523461\\
13.111339374215643    359.45536089\\
15.013107289081734    525.59888467\\
17.190722018585745    770.67957105\\
19.68419447286612    1132.74370487\\
22.539339047347912    1668.44311044\\
25.808615404180742    2462.19046846\\
29.552092352028865    3639.90041822\\
33.83855153428233    5389.4878794\\
38.7467512045613    7991.72205936\\
44.366873309786115    11866.01754434\\
50.8021804691302    17639.15349618\\
58.170913293743574    26251.02517902\\
66.60846290809158    39105.07024012\\
76.26985859023443    58309.65046861\\
87.33261623828433    87003.93784111\\
};
\addlegendentry{2-step O-GD}

\addplot [color=mycolor5, line width=1.5pt, dotted]
table[row sep=crcr]{%
1.0    1.99846595\\
1.1450475699382818    2.29022146\\
1.3111339374215643    2.62255298\\
1.5013107289081733    3.00286274\\
1.7190722018585745    3.43852112\\
1.9684194472866121    3.93707659\\
2.2539339047347906    4.50858502\\
2.580861540418075    5.16481992\\
2.955209235202887    5.92136351\\
3.383855153428234    6.80026545\\
3.874675120456131    7.83356467\\
4.436687330978613    9.0679131\\
5.0802180469130205    10.57028927\\
5.817091329374358    12.43510929\\
6.6608462908091575    14.79367257\\
7.626985859023444    17.63239349\\
8.733261623828433    19.36096895\\
10.0    22.67782949\\
11.450475699382817    27.76093079\\
13.111339374215643    34.92871646\\
15.013107289081734    44.91744253\\
17.190722018585745    58.9830807\\
19.68419447286612    78.98571744\\
22.539339047347912    107.65018807\\
25.808615404180742    148.99761666\\
29.552092352028865    209.0191681\\
33.83855153428233    296.49588861\\
38.7467512045613    424.32583259\\
44.366873309786115    611.39124706\\
50.8021804691302    885.65340907\\
58.170913293743574    1287.67518103\\
66.60846290809158    1878.91372658\\
76.26985859023443    2750.82291741\\
87.33261623828433    4033.30848684\\
100.0    5929.73977365\\
};
\addlegendentry{10-step O-GD}

\addplot [color=mycolor3, line width=1.5pt, dashdotted]
table[row sep=crcr]{%
1.0    1.99970916\\
1.1450475699382818    2.68567883\\
1.3111339374215643    3.64049364\\
1.5013107289081733    4.65347666\\
1.7190722018585745    5.69805107\\
1.9684194472866121    6.45473917\\
2.2539339047347906    6.94758426\\
2.580861540418075    7.61646264\\
2.955209235202887    9.20712374\\
3.383855153428234    11.66736569\\
3.874675120456131    15.01909469\\
4.436687330978613    19.4497539\\
5.0802180469130205    25.36631931\\
5.817091329374358    33.3944892\\
6.6608462908091575    44.43148706\\
7.626985859023444    59.78379942\\
8.733261623828433    81.41104871\\
10.0    112.319185\\
11.450475699382817    157.2083986\\
13.111339374215643    223.39332106\\
15.013107289081734    321.71903907\\
17.190722018585745    466.42475625\\
19.68419447286612    672.65170715\\
22.539339047347912    954.50060415\\
25.808615404180742    1330.01345556\\
29.552092352028865    1829.8605421\\
33.83855153428233    2498.96571042\\
38.7467512045613    3397.59446983\\
44.366873309786115    4605.44515066\\
50.8021804691302    6228.87858279\\
58.170913293743574    8409.04057355\\
66.60846290809158    11334.34982501\\
76.26985859023443    15255.38295791\\
87.33261623828433    20505.41367909\\
100.0    27526.94306997\\
};
\addlegendentry{O-NM}

\addplot [color=mycolor2, line width=1.5pt, densely dotted]
table[row sep=crcr]{%
1.0    1.99826983\\
1.1450475699382818    2.65161985\\
1.3111339374215643    3.54014475\\
1.5013107289081733    4.35721737\\
1.7190722018585745    5.36218367\\
1.9684194472866121    6.22323323\\
2.2539339047347906    6.82573307\\
2.580861540418075    7.00192763\\
2.955209235202887    6.52071045\\
3.383855153428234    6.00179734\\
3.874675120456131    5.97061874\\
4.436687330978613    6.38646524\\
5.0802180469130205    7.22474683\\
5.817091329374358    8.65543403\\
6.6608462908091575    10.65362295\\
7.626985859023444    13.48421716\\
8.733261623828433    17.59666075\\
10.0    23.73005398\\
11.450475699382817    33.09594823\\
13.111339374215643    47.70138038\\
15.013107289081734    70.94004377\\
17.190722018585745    108.61339145\\
19.68419447286612    170.38190636\\
22.539339047347912    271.61961448\\
25.808615404180742    434.99575137\\
29.552092352028865    688.35205633\\
33.83855153428233    1061.04241681\\
38.7467512045613    1587.52975403\\
44.366873309786115    2314.1212332\\
50.8021804691302    3304.22007979\\
58.170913293743574    4642.62621625\\
66.60846290809158    6443.17803481\\
76.26985859023443    8855.15225756\\
87.33261623828433    12075.45323483\\
100.0    16365.75778593\\
};
\addlegendentry{O-AGD}

\end{axis}
\end{tikzpicture}%
    \end{subfigure}
    \hfill
    \begin{subfigure}{0.9\columnwidth}
%
%
\definecolor{mycolor1}{RGB}{204, 51, 63} 
\definecolor{mycolor2}{RGB}{68, 12, 84}
\definecolor{mycolor3}{RGB}{53, 95, 141}
\definecolor{mycolor4}{RGB}{34, 168, 132}
\definecolor{mycolor5}{RGB}{122, 210, 81}

\begin{tikzpicture}

\begin{axis}[%
width=2.5in,
height=1in,
at={(0.758in,0.481in)},
scale only axis,
xmode=log,
xmin=1,
xmax=100,
xminorticks=true,
xlabel style={font=\color{white!15!black}},
ymin=0,
ymax=100,
ylabel={$\gamma_2$},
ylabel style={font=\color{white!15!black}, yshift=0.11in},
axis background/.style={fill=white},
xmajorgrids,
xminorgrids,
ymajorgrids,
legend style={at={(0.015,0.97)}, nodes={scale=0.7, transform shape}, anchor=north west, legend cell align=left, align=left, draw=white!15!black}
]
\addplot [color=mycolor1, line width=1.5pt, solid]
table[row sep=crcr]{%
1.0    0.00054815\\
1.1450475699382818    0.00000110966127\\
1.3111339374215643    0.00000105650348\\
1.5013107289081733    0.00000110965566\\
1.7190722018585745    0.00000101482743\\
1.9684194472866121    0.00000106887937\\
2.2539339047347906    0.0000185433332\\
2.580861540418075    1.5279679\\
2.955209235202887    4.62203663\\
3.383855153428234    8.0748965\\
3.874675120456131    11.13855474\\
4.436687330978613    14.06104287\\
5.0802180469130205    17.17310743\\
5.817091329374358    20.58235434\\
6.6608462908091575    24.72121553\\
7.626985859023444    29.7288176\\
8.733261623828433    35.83751342\\
10.0    43.32661903\\
11.450475699382817    52.54565197\\
13.111339374215643    63.92749701\\
15.013107289081734    78.01335341\\
17.190722018585745    95.48083695\\
19.68419447286612    117.1817027\\
22.539339047347912    144.17390247\\
25.808615404180742    177.77610124\\
29.552092352028865    219.64198925\\
33.83855153428233    271.8181954\\
38.7467512045613    336.84260046\\
44.366873309786115    417.85777844\\
50.8021804691302    518.69132042\\
58.170913293743574    644.11243199\\
66.60846290809158    800.06598058\\
76.26985859023443    994.52757161\\
87.33261623828433    1236.04961323\\
100.0    1539.23804253\\
};
\addlegendentry{O-GD}

\addplot [color=mycolor4, line width=1.5pt, dashed]
table[row sep=crcr]{%
1.0    0.00062223\\
1.1450475699382818    0.00000471991392\\
1.3111339374215643    0.00000528965726\\
1.5013107289081733    0.00000589210175\\
1.7190722018585745    0.00000722562596\\
1.9684194472866121    0.00000947609727\\
2.2539339047347906    0.0000135729204\\
2.580861540418075    0.0000236096911\\
2.955209235202887    0.0000530714057\\
3.383855153428234    0.0034316\\
3.874675120456131    1.00418519\\
4.436687330978613    1.31323346\\
5.0802180469130205    1.60014433\\
5.817091329374358    1.89801348\\
6.6608462908091575    2.19705233\\
7.626985859023444    2.48900357\\
8.733261623828433    2.76021869\\
10.0    2.99821375\\
11.450475699382817    3.19546054\\
13.111339374215643    3.35253882\\
15.013107289081734    3.47659928\\
17.190722018585745    3.57565554\\
19.68419447286612    3.65477996\\
22.539339047347912    3.71808209\\
25.808615404180742    3.77393481\\
29.552092352028865    3.81503992\\
33.83855153428233    3.84578729\\
38.7467512045613    3.86967327\\
44.366873309786115    3.89676451\\
50.8021804691302    3.8603451\\
58.170913293743574    3.61311997\\
66.60846290809158    3.16243631\\
76.26985859023443    3.04837136\\
87.33261623828433    3.64377855\\
};
\addlegendentry{2-step O-GD}

\addplot [color=mycolor5, line width=1.5pt, dotted]
table[row sep=crcr]{%
1.0    0.00157125\\
1.1450475699382818    0.0000176543582\\
1.3111339374215643    0.0000337661152\\
1.5013107289081733    0.0000297860495\\
1.7190722018585745    0.0000460124113\\
1.9684194472866121    0.0000210995791\\
2.2539339047347906    0.0000257519511\\
2.580861540418075    0.0000291335687\\
2.955209235202887    0.0000301052019\\
3.383855153428234    0.0000365304296\\
3.874675120456131    0.0000460140629\\
4.436687330978613    0.0000573359377\\
5.0802180469130205    0.0000795634668\\
5.817091329374358    0.00013119\\
6.6608462908091575    0.00026011\\
7.626985859023444    0.12396294\\
8.733261623828433    1.28878908\\
10.0    1.95226365\\
11.450475699382817    2.36888917\\
13.111339374215643    2.73951535\\
15.013107289081734    3.14539356\\
17.190722018585745    3.59245257\\
19.68419447286612    4.06852121\\
22.539339047347912    4.57122746\\
25.808615404180742    5.09382913\\
29.552092352028865    5.63819465\\
33.83855153428233    6.16919902\\
38.7467512045613    6.6547005\\
44.366873309786115    7.07540744\\
50.8021804691302    7.4196252\\
58.170913293743574    7.7227487\\
66.60846290809158    7.93108671\\
76.26985859023443    8.11241756\\
87.33261623828433    8.27484285\\
100.0    8.49258676\\
};
\addlegendentry{10-step O-GD}

\addplot [color=mycolor3, line width=1.5pt, dashdotted]
table[row sep=crcr]{%
1.0    0.00030381\\
1.1450475699382818    0.00000106191286\\
1.3111339374215643    0.00000113072221\\
1.5013107289081733    0.12129016\\
1.7190722018585745    0.37656921\\
1.9684194472866121    0.87785312\\
2.2539339047347906    1.55377757\\
2.580861540418075    2.16994952\\
2.955209235202887    2.52726977\\
3.383855153428234    2.79898134\\
3.874675120456131    3.09389539\\
4.436687330978613    3.46913934\\
5.0802180469130205    3.93903266\\
5.817091329374358    4.50533578\\
6.6608462908091575    5.17037083\\
7.626985859023444    5.93790975\\
8.733261623828433    6.80630687\\
10.0    7.76391582\\
11.450475699382817    8.77834008\\
13.111339374215643    9.78864754\\
15.013107289081734    10.71569587\\
17.190722018585745    11.51171614\\
19.68419447286612    12.26576476\\
22.539339047347912    13.2178792\\
25.808615404180742    14.57981388\\
29.552092352028865    16.39460693\\
33.83855153428233    18.64356542\\
38.7467512045613    21.32332037\\
44.366873309786115    24.48104467\\
50.8021804691302    28.154262\\
58.170913293743574    32.41401643\\
66.60846290809158    37.34888935\\
76.26985859023443    43.03342678\\
87.33261623828433    49.6187144\\
100.0    57.22239025\\
};
\addlegendentry{O-NM}

\addplot [color=mycolor2, line width=1.5pt, densely dotted]
  table[row sep=crcr]{%
1.0    0.00255234\\
1.1450475699382818    0.00000159781028\\
1.3111339374215643    0.00000221445264\\
1.5013107289081733    0.00195083\\
1.7190722018585745    0.0365759\\
1.9684194472866121    0.20454317\\
2.2539339047347906    0.47779318\\
2.580861540418075    0.89480687\\
2.955209235202887    1.47852995\\
3.383855153428234    2.08941005\\
3.874675120456131    2.67575454\\
4.436687330978613    3.28469756\\
5.0802180469130205    3.9605062\\
5.817091329374358    4.66481819\\
6.6608462908091575    5.47142162\\
7.626985859023444    6.40403563\\
8.733261623828433    7.48324889\\
10.0    8.72380527\\
11.450475699382817    10.12476965\\
13.111339374215643    11.65596077\\
15.013107289081734    13.24260748\\
17.190722018585745    14.75830855\\
19.68419447286612    16.0592777\\
22.539339047347912    17.05220663\\
25.808615404180742    17.70944773\\
29.552092352028865    18.04856881\\
33.83855153428233    18.15725892\\
38.7467512045613    18.14072668\\
44.366873309786115    18.0721087\\
50.8021804691302    17.98409248\\
58.170913293743574    17.90203382\\
66.60846290809158    17.81818069\\
76.26985859023443    17.74242776\\
87.33261623828433    17.67392263\\
100.0    17.61719912\\
};
\addlegendentry{O-AGD}

\end{axis}
\end{tikzpicture}%
    \end{subfigure}
    \hfill
    \vspace{0.5em}
    \begin{subfigure}{0.9\columnwidth}
%
%
\definecolor{mycolor1}{RGB}{204, 51, 63} 
\definecolor{mycolor2}{RGB}{68, 12, 84}
\definecolor{mycolor3}{RGB}{53, 95, 141}
\definecolor{mycolor4}{RGB}{34, 168, 132}
\definecolor{mycolor5}{RGB}{122, 210, 81}

\begin{tikzpicture}

\begin{axis}[%
width=2.5in,
height=1in,
at={(0.758in,0.481in)},
scale only axis,
xmode=log,
xmin=1,
xmax=100,
xminorticks=true,
xlabel style={font=\color{white!15!black}},
xlabel={$\frac{L}{m}$},
ymin=0,
ymax=50,
ylabel={$\gamma_3$},
ylabel style={font=\color{white!15!black}, yshift=0.18in},
axis background/.style={fill=white},
xmajorgrids,
xminorgrids,
ymajorgrids,
legend style={at={(0.015,0.97)}, nodes={scale=0.7, transform shape}, anchor=north west, legend cell align=left, align=left, draw=white!15!black}
]
\addplot[color=mycolor1, line width=1.5pt, solid] table[row sep=crcr] {
1.0    0.0\\
1.1450475699382818    5.783802639300411e-07\\
1.3111339374215643    7.569532650548262e-07\\
1.5013107289081733    9.050270964185993e-07\\
1.7190722018585745    9.84538124511524e-07\\
1.9684194472866121    1.0929683442010017e-06\\
2.2539339047347906    1.1275911497686031e-05\\
2.580861540418075    0.7412992460617795\\
2.955209235202887    2.1656066720392073\\
3.383855153428234    3.6820626152669513\\
3.874675120456131    4.846719732433918\\
4.436687330978613    5.832726549206101\\
5.0802180469130205    6.7978048414960535\\
5.817091329374358    7.789550082604983\\
6.6608462908091575    8.870113956272025\\
7.626985859023444    10.070314699972842\\
8.733261623828433    11.41618158212185\\
10.0    12.93407949307405\\
11.450475699382817    14.652559516651529\\
13.111339374215643    16.6029263997537\\
15.013107289081734    18.82079404232362\\
17.190722018585745    21.34592658302572\\
19.68419447286612    24.223390783338523\\
22.539339047347912    27.503503447486395\\
25.808615404180742    31.242971983364015\\
29.552092352028865    35.5047002382137\\
33.83855153428233    40.35811010446448\\
38.7467512045613    45.878814404969404\\
44.366873309786115    52.147608809711684\\
50.8021804691302    59.25137427913043\\
58.170913293743574    67.2827118989872\\
66.60846290809158    76.3462483457868\\
76.26985859023443    86.57046760457888\\
87.33261623828433    98.12164768070178\\
100.0    111.20251832486933\\
};
\addlegendentry{O-GD}

\addplot[color=mycolor4,line width=1.5pt, dashed] table[row sep=crcr] {
1.0    0.0\\
1.1450475699382818    1.6982911278029824e-06\\
1.3111339374215643    2.5955006659415633e-06\\
1.5013107289081733    3.419398192008019e-06\\
1.7190722018585745    5.550764809559243e-06\\
1.9684194472866121    8.264151064496548e-06\\
2.2539339047347906    1.1387522466955172e-05\\
2.580861540418075    1.7062634865750947e-05\\
2.955209235202887    3.031890579390313e-05\\
3.383855153428234    0.001656675177260568\\
3.874675120456131    0.48458504691586657\\
4.436687330978613    0.6217146895820097\\
5.0802180469130205    0.7217461022896613\\
5.817091329374358    0.8000571034601531\\
6.6608462908091575    0.8564936981332185\\
7.626985859023444    0.8893517166147984\\
8.733261623828433    0.8974417212811283\\
10.0    0.8811533726513995\\
11.450475699382817    0.8443220545991672\\
13.111339374215643    0.7940762103104306\\
15.013107289081734    0.7368829712334486\\
17.190722018585745    0.6771696270406357\\
19.68419447286612    0.6175921221562334\\
22.539339047347912    0.5596722794339493\\
25.808615404180742    0.5045756356808166\\
29.552092352028865    0.4528207683341554\\
33.83855153428233    0.4049074254220005\\
38.7467512045613    0.36088706853871827\\
44.366873309786115    0.32084011722749856\\
50.8021804691302    0.28434686701668876\\
58.170913293743574    0.2521414867578857\\
66.60846290809158    0.22443996044168638\\
76.26985859023443    0.19696247885549703\\
87.33261623828433    0.17538582189881866\\
100.0    0.0\\
};
\addlegendentry{2-step O-GD}

\addplot[color=mycolor5, line width=1.5pt, dotted] table[row sep=crcr] {
1.0    0.0\\
1.1450475699382818    6.344376188355779e-06\\
1.3111339374215643    2.729621725524669e-05\\
1.5013107289081733    2.1404764893144025e-05\\
1.7190722018585745    3.8064510187709184e-05\\
1.9684194472866121    5.958641440272167e-06\\
2.2539339047347906    1.1452845398020685e-05\\
2.580861540418075    1.3670956972734271e-05\\
2.955209235202887    1.4264054643557795e-05\\
3.383855153428234    1.960171976991646e-05\\
3.874675120456131    2.417299959450026e-05\\
4.436687330978613    2.9218338504941465e-05\\
5.0802180469130205    3.7190557401073826e-05\\
5.817091329374358    5.241647688876258e-05\\
6.6608462908091575    8.72716835580186e-05\\
7.626985859023444    0.03455752689649248\\
8.733261623828433    0.35942447082384743\\
10.0    0.5459902162755664\\
11.450475699382817    0.6724305000891774\\
13.111339374215643    0.7809127896782979\\
15.013107289081734    0.879439348875649\\
17.190722018585745    0.9660508789001276\\
19.68419447286612    1.0395680747614973\\
22.539339047347912    1.097141961045817\\
25.808615404180742    1.1363089021730348\\
29.552092352028865    1.1506824014982178\\
33.83855153428233    1.1402596120675594\\
38.7467512045613    1.1067381665205913\\
44.366873309786115    1.054515944723613\\
50.8021804691302    0.9895962087795935\\
58.170913293743574    0.9183253840181769\\
66.60846290809158    0.8454413859254654\\
76.26985859023443    0.7719262766113485\\
87.33261623828433    0.700930122765774\\
100.0    0.6334562491858783\\
};
\addlegendentry{10-step O-GD}

\addplot[color=mycolor3, line width=1.5pt, dash dot] table[row sep=crcr] {
1.0    0.0\\
1.1450475699382818    4.3315716672652116e-07\\
1.3111339374215643    8.990628131174396e-07\\
1.5013107289081733    0.10283819743611668\\
1.7190722018585745    0.30497299005241024\\
1.9684194472866121    0.6589212759832014\\
2.2539339047347906    1.0923067858952689\\
2.580861540418075    1.52920607092328\\
2.955209235202887    1.9023570924551307\\
3.383855153428234    2.291426789899823\\
3.874675120456131    2.7323276417215268\\
4.436687330978613    3.235378418725581\\
5.0802180469130205    3.7992209984566543\\
5.817091329374358    4.421581082618054\\
6.6608462908091575    5.10118891335007\\
7.626985859023444    5.834792949097069\\
8.733261623828433    6.612391649349733\\
10.0    7.411964873073623\\
11.450475699382817    8.192460107567939\\
13.111339374215643    8.890189759868578\\
15.013107289081734    9.430232065330653\\
17.190722018585745    9.772972179783354\\
19.68419447286612    9.99129888498347\\
22.539339047347912    10.269844272752911\\
25.808615404180742    10.757160901070963\\
29.552092352028865    11.459876709767872\\
33.83855153428233    12.331908190556144\\
38.7467512045613    13.338599451249072\\
44.366873309786115    14.46428550068926\\
50.8021804691302    15.699347007367285\\
58.170913293743574    17.043737217156057\\
66.60846290809158    18.49693849839729\\
76.26985859023443    20.062059775083014\\
87.33261623828433    21.74312799776845\\
100.0    23.547124786332645\\
};
\addlegendentry{O-NM}

\addplot[color=mycolor2,line width=1.5pt, densely dotted] table[row sep=crcr] {
1.0    0.0\\
1.1450475699382818    2.5159269565722228e-08\\
1.3111339374215643    5.051967978528258e-07\\
1.5013107289081733    0.29554667298937054\\
1.7190722018585745    0.5357880393471705\\
1.9684194472866121    0.9478095915077166\\
2.2539339047347906    1.559261009188491\\
2.580861540418075    2.407050132261089\\
2.955209235202887    3.5349126961258395\\
3.383855153428234    4.708215380422243\\
3.874675120456131    5.821884482521034\\
4.436687330978613    6.96367676043836\\
5.0802180469130205    8.20062526567647\\
5.817091329374358    9.55431777473381\\
6.6608462908091575    11.086900557721105\\
7.626985859023444    12.818798234280854\\
8.733261623828433    14.768198846639251\\
10.0    16.953628441438003\\
11.450475699382817    19.394507890833214\\
13.111339374215643    22.103590773832746\\
15.013107289081734    25.059399544163597\\
17.190722018585745    28.155083645568116\\
19.68419447286612    31.15872589063841\\
22.539339047347912    33.74395871384359\\
25.808615404180742    35.60828861856121\\
29.552092352028865    36.667126782162235\\
33.83855153428233    37.12084372411895\\
38.7467512045613    37.24088606978028\\
44.366873309786115    37.21056070892256\\
50.8021804691302    37.12024122214787\\
58.170913293743574    37.017273004181014\\
66.60846290809158    36.90779507021773\\
76.26985859023443    36.804737621021715\\
87.33261623828433    36.71555773798801\\
100.0    36.61810823433964\\
};
\addlegendentry{O-AGD}

\end{axis}
\end{tikzpicture}%
    \end{subfigure}
    \caption{\rev{Regret sensitivity to $\mathcal{S}_T$ $(\gamma_1)$, $\mathcal{G}_T$ $(\gamma_2)$ and $\mathcal{V}_T$ $(\gamma_3)$ over function class condition ratio, according to Theorem \ref{thm:dynamic}.}}
    \label{fig:results_variational}
\end{figure}
\ifthenelse{\boolean{arxiv}}{\section{Extension to parameter-varying algorithms}
\label{sec:LPV}

So far we have only considered static algorithms, in the sense that the parametrization $A, B, C, D$ were time-invariant matrices. In line with \cite{jakob_iannelli}, we can also cover time-varying algorithms. To motivate this, note that the strong convexity and smoothness constants are typically used for the tuning of algorithm parameters. In practice, when those constants are time-varying, one can also resort to algorithms with time-varying parameters, such as stepsizes for example. 
A way to capture such generalized algorithms is to consider (\ref{eq:generalized_algorithm:state_space}) as a linear parameter-varying (LPV) system
\begin{align}
    \label{eq:lpv_system}
    \begin{aligned}
    \xi_{t+1} &= A(\theta_t) \xi_t + B(\theta_t) u_t \\
    y_t &= C(\theta_t) \xi_t + D(\theta_t) u_t.
    \end{aligned}
\end{align}
Here, $\theta_t$ are parameters from some compact parameter domain $\Theta$. Formulation (\ref{eq:lpv_system}) allows for instance to regard $m_t$ or $L_t$ as explicit parameters, or nonlinear adaption laws as e.g. in \cite{hu_neurips}. We can also account for the cases where lower and upper bounds on the parameter variations are available, i.e. $v_{\min} \leq \Delta \theta_t \leq v_{\max}$ for $\Delta \theta_t := \theta_t - \theta_{t+1}$.

It can be shown that the results presented in the previous section hold also for the LPV setup with only minor modifications. We state the following extension of Theorem~\ref{thm:dynamic}.

\begin{proposition}
    Consider problem (\ref{eq:composite_optimization}) with algorithm (\ref{eq:generalized_algorithm}) but LPV realization (\ref{eq:lpv_system}). Let Assumptions \ref{assum:fixed_point} and \ref{assum:Bu_Du} hold uniformly for all $\theta \in \Theta$. Moreover, let the IQC realization of $\Psi_f$ and the augmented plant (\ref{eq:augmented_plant}) have parameter dependent realizations, indicated by a supscript $\theta$. If there exists a symmetric matrix valued function $P_\theta~:=~P(\theta):~\Theta~\rightarrow~\mathbb{S}^{n_\xi+n_\zeta}$, non-negative vectors $\lambda_p \in \mathbb{R}_{\geq 0}^p, \lambda_q \in \mathbb{R}_{\geq 0}^q$ and scalars \rev{$\gamma_{\Delta \xi}, \gamma_{\Delta \delta} > 0$}, such that $P(\theta) \succ 0$ and the LMI
    {\setlength{\arraycolsep}{1pt}
    \rev{
    \begin{multline}
        \label{eq:LMI:variational}
        \hspace{-12pt}
        \begin{bmatrix}
            \star
        \end{bmatrix}^\top
        \hspace{-5pt}
        \begin{bmatrix}
            -P_{\theta^+} &  &  & & \\
            & P_{\theta} & & & \\
            & & M_\lambda & & \\
            & & & -\gamma_{\Delta \xi} I & \\
            & & & &  -\gamma_{\Delta \delta} I
        \end{bmatrix}
        \hspace{-5pt}
        \begin{bmatrix}
        I & 0 & 0 & 0\\
        \hat{A}_\theta & \hat{B}_{\theta,u} & \hat{B}_{\theta, \Delta \xi} & \hat{B}_{\theta,\Delta \delta} \\ \hat{C}_\theta & \hat{D}_{\theta,u} & \hat{D}_{\theta,\Delta \xi} & \hat{D}_{\theta,\Delta \delta} \\
        0 & 0 & I & 0 \\ 0 & 0 & 0 & I
        \end{bmatrix}
        \\ + \frac{1}{2} 
        \begin{bmatrix}
            0 & \bigl[ \begin{smallmatrix} C_1^\top & 0 \\ 0 & 0 \end{smallmatrix} \bigr] \\ \bigl[ \begin{smallmatrix} C_1 & 0 \\ 0 & 0 \end{smallmatrix} \bigr] & 0
        \end{bmatrix}
        \preceq 0 
    \end{multline}
    }
    }
    holds with $M_\lambda = \mathrm{blkdiag}(M_2 \otimes \mathrm{diag}(\lambda_p), M_1 \otimes \mathrm{diag}(\lambda_q))$, $P_{\theta^+} := P(\theta + \Delta \theta)$ for all $\theta \in \Theta$ and all possible variations $v_{\min} \leq \Delta \theta \leq v_{\max}$ then we have the same regret bound as in (\ref{eq:regret:dynamic:bigO}) with $m=\inf_t m_t$ and $L=\sup_t L_t$.
\end{proposition}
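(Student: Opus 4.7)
The plan is to adapt the proof of Theorem~\ref{thm:dynamic} by replacing the static quadratic storage $\|\eta_t\|^2_P$ with the parameter-dependent storage $V_t := \|\eta_t\|^2_{P(\theta_t)}$. The parameter-dependent LMI (\ref{eq:LMI:variational}) is designed so that, evaluated at the realized pair $(\theta_t, \Delta \theta_t)$ at each time step, the two diagonal $P$-blocks of the outer matrix pair (with the appropriate sign convention) with the current and successor augmented states to produce exactly $V_{t+1}-V_t$, which is the structure required for a telescoping sum $\sum_{t=1}^T (V_{t+1}-V_t)=V_{T+1}-V_1$ to emerge.

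First I would note that, since $f_t$ is $m_t$-strongly convex and $L_t$-smooth with $m \leq m_t$ and $L \geq L_t$, it is a fortiori $m$-strongly convex and $L$-smooth with the worst-case moduli $m=\inf_t m_t$ and $L=\sup_t L_t$. Proposition~\ref{prop:vIQC} therefore applies uniformly along any admissible parameter trajectory $\{\theta_t\} \subset \Theta$, so the summed vIQC inequalities $\sum_{t=1}^{T}(\psi_t^i)^\top M_2 \psi_t^i \geq -4(L-m)\mathcal{V}_T$ and $\sum_{t=1}^{T}(\hat{\psi}_t^j)^\top M_1 \hat{\psi}_t^j \geq 0$ continue to hold. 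The augmented plant (\ref{eq:augmented_plant}) and filter $\Psi_f$ now carry the index $\theta_t$, but the algebraic form $\eta_{t+1} = \hat{A}_{\theta_t}\eta_t + \hat{B}_{\theta_t,u}u_t + \hat{B}_{\theta_t,\Delta\xi}\Delta \xi_t^\star + \hat{B}_{\theta_t,\Delta\delta}\bm{\Delta \delta}_t$ and $\psi_t = \hat{C}_{\theta_t}\eta_t+\cdots$ is unchanged.

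Next, for each $t$, I would left- and right-multiply the instantiated LMI by $\mathrm{vec}(\eta_t, u_t, \Delta\xi_t^\star, \bm{\Delta \delta}_t)$. Because the inner matrix is structured so that the first block row returns $\eta_t$ and the second returns $\eta_{t+1}$, the resulting scalar inequality is the exact analogue of the one-step bound used in the proof of Theorem~\ref{thm:dynamic}, with $\|\eta_t\|^2_P \to V_t$ and $\|\eta_{t+1}\|^2_P \to V_{t+1}$. Summing from $t=1$ to $T$, the storage terms telescope into $V_{T+1}-V_1 \geq -V_1$, and applying the vIQC bounds together with convexity $(x_t-x_t^\star)^\top \nabla f_t(x_t) \geq f_t(x_t)-f_t(x_t^\star)$ delivers $\mathcal{R}_T \leq V_1 + \gamma_1 \mathcal{O}(\mathcal{S}_T) + \gamma_2 \mathcal{G}_T + \gamma_3 \mathcal{V}_T$, which is exactly (\ref{eq:regret:dynamic:bigO}) with the stated constants.

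The main obstacle, compared to Theorem~\ref{thm:dynamic}, is the infinite-dimensional character of the feasibility condition: the LMI must hold for \emph{all} $\theta \in \Theta$ and all admissible $\Delta \theta \in [v_{\min}, v_{\max}]$. The proof itself only invokes the LMI at the realized pair at each time step, so this obstacle concerns constructability rather than validity; standard remedies include gridding $\Theta$, exploiting polytopic/affine parameter dependence to reduce to a finite set of vertex LMIs, or using sum-of-squares relaxations. A secondary subtlety is uniform positive-definiteness of $P(\cdot)$ over $\Theta$, needed both for $V_t$ to be a valid Lyapunov storage and to discard the nonnegative tail $V_{T+1}$ in the telescoping step; this follows from continuity of $P(\cdot)$ on the compact set $\Theta$ combined with pointwise $P(\theta) \succ 0$.
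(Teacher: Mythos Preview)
Your proposal is correct and matches the paper's approach: the paper's own proof is a single sentence stating that it ``is in line with Theorem~\ref{thm:dynamic} and the methodologies in \cite{jakob_iannelli}'', and what you have written is precisely the expected elaboration of that sentence---replace the static storage by the parameter-dependent $V_t=\|\eta_t\|^2_{P(\theta_t)}$, instantiate the LMI at the realized parameter pair, and telescope as in Theorem~\ref{thm:dynamic}. Your remarks on the infinite-dimensional feasibility condition and on uniform positivity of $P(\cdot)$ are apt implementation comments but, as you note, do not affect the validity of the argument.
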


The proof is in line with Theorem \ref{thm:dynamic} and the methodologies in \cite{jakob_iannelli}.
Note however, that the readout matrix $C_1$ has to stay parameter-invariant, since $x_t$ is only allowed to depend on information up to $t-1$. 
We leave the exploration of this framework, for instance to analyze adaptive OCO algorithms, for future works.

}{}
\section{Conclusion}
\label{sec:conclusions}

In this paper, we have presented a novel framework \rev{for an automated regret analysis of first-order algorithms in OCO}. By recasting the algorithm as a feedback interconnection of a linear system and a time-varying \rev{oracle} we are able to provide an alternative proof strategy and a computational tool to quantify the regret of generalized first-order optimization algorithms. This new analysis framework represents a new viewpoint on OCO and can contribute to obtain a more systematic way to show regret. Future work may include the use of further robust control tools, such as 
algorithm synthesis or robustness analyses for gradient errors.
\ifthenelse{\boolean{arxiv}}{
    \appendix
    \section{}

\subsection{On the vIQC for the normal cone}\label{appendix:A}

Note that even though $\mathcal{I}_\mathcal{X}$ (and thus $\mathcal{N}_\mathcal{X}$) is not time-varying, we still \emph{cannot} recover conventional dynamic IQCs that have been developed for static passive operators. Since dynamic IQCs incorporate memory, the variation of the minimizer $x_t^\star$ has still to be accounted for. 

Since $\mathcal{I}_\mathcal{X}$ is proper, closed and convex, we can apply Proposition \ref{prop:vIQC}, letting $f_t \leftarrow \mathcal{I}_{\mathcal{X}}$ and $\nabla f_t \leftarrow \mathcal{N}_\mathcal{X}$, in the limit $m \rightarrow 0$ and $L \rightarrow \infty$. I.e., we expand the left hand side of (\ref{eq:IQC:delta:quadratic_inequality}), multiply the inequality by $\frac{1}{L}$, set $m=0$ and let $L\rightarrow \infty$. Moreover, since $\mathcal{I}_\mathcal{X}$ is static, the right hand side boils down to zero, and we get
\begin{equation*}
    \sum_{t=1}^T \hat{\psi}_t^\top M_1 \hat{\psi}_t \geq 0
\end{equation*}
for the reduced filter realization
\begin{equation}
    \hat{\psi}_t = \underbrace{\left[ \begin{array}{c|ccc} 0 & I_d & 0 & I_d \\ \hline  -I_d & I_d & 0 & 0 \\ 0 & 0 & I_d & 0 \end{array} \right]}_{=:\Psi_\mathcal{I}} \begin{bmatrix} x_t - x_t^\star \\ \beta_t \\ \Delta x_t^\star \end{bmatrix},
\end{equation}
where $\beta_t \in \mathcal{N}_\mathcal{X}(x_t)$. This gives a possible realization of $\Psi_\mathcal{I}$.

\subsection{On building the augmented plant}\label{appendix:B}

We make some comments for the sake of implementation. Note that in practice, one might combine different types of IQCs for the components of $(u_t, y_t)$. I.e., we may use both a pointwise and vIQC for the very same component, as well as IQCs for repeated nonlinearities \cite{amato}. Ultimately, we get a set of filter outputs $(\psi_t^1; \dots; \psi_t^k, \dots, \psi_t^K)$ (minimum $p+q$), comprising all pointwise/variational/repeated IQCs, which are each generated by one filter $\Psi_k$, $k\in\mathbb{I}_K$. The multiplier matrix $M_\lambda$ has respective multiplier $\lambda_k M_k$, $M_k \in \{ M_1, M_2 \}$ on its diagonal, and may also have cross terms depending on repeated IQCs.
The vertical stack of all filters $\Psi_k$ gives the concatenated filter realization
\begin{equation*}
\left[ \begin{array}{c|c} \hat{A}_\Psi & \hat{B}_\Psi \\ \hline \hat{C}_\Psi & \hat{D}_\psi \end{array} \right],
\end{equation*}
mapping all inputs $\delta_t^i, g_t^j, \Delta x_t^\star, \mathbf{\Delta \delta}(s_t^i)$, $i\in\mathbb{I}_p$, $j\in\mathbb{I}_q$, to all IQC components $\psi_t^k$, $k\in\mathbb{I}_K$. An input entry selector matrix $S_\mathrm{in}$ ensures that the output order matches the multiplier order in $M_\lambda$, and an output permutation matrix $S_\mathrm{out}$ is used to correctly pick the individual inputs from the stacked input vector $\mathrm{vec}(\tilde{y}_t, u_t, \Delta \xi_t^\star, \bm{\Delta \delta}_t)$ and to realize $\Delta x_t^\star$ with the algorithm output matrix $C$. We arrive at the final compact IQC
\begin{equation*}
\left[ \begin{array}{c|c} {A}_\Psi & {B}_\Psi \\ \hline {C}_\Psi & {D}_\psi \end{array} \right] = \left[ \begin{array}{c|c} \hat{A}_\Psi & \hat{B}_\Psi S_\mathrm{in} \\ \hline S_\mathrm{out} \hat{C}_\Psi & S_\mathrm{out} \hat{D}_\psi S_\mathrm{in} \end{array} \right],
\end{equation*}
which is then used to realize (\ref{eq:augmented_plant})

}{}

\bibliographystyle{ieeetr}
\bibliography{root}

\end{document}